\documentclass[11pt, reqno]{amsart}
\usepackage[utf8]{inputenc}
\usepackage{indentfirst, amssymb, amsmath, amsthm, mathrsfs, setspace, indentfirst, enumerate,  mathrsfs, amsmath, amsthm}
\usepackage[colorlinks=true,linkcolor=purple, citecolor=blue,urlcolor=magenta]{hyperref}
\usepackage{graphicx}  
\usepackage{cite}    
\usepackage{float}         
\usepackage{xcolor}        
\usepackage{colortbl}     
\usepackage{multirow}     
\usepackage{tikz}          

\definecolor{sectionlink}{RGB}{0,100,200} 
\newtheorem{conj}{Conjecture}[section]
\newtheorem{prop}{Proposition}[section]
\newtheorem{theo}{Theorem}[section]
\newtheorem{lem}{Lemma}[section]
\newtheorem{cor}{Corollary}[section]

\newtheorem{exm}{Example}[section]
\newtheorem{defi}{Definition}[section]
\newtheorem{rem}{Remark}[section]
\newtheorem*{theoA}{Theorem A}

\numberwithin{equation}{section}
\newcommand{\beas}{\begin{eqnarray*}}
\newcommand{\eeas}{\end{eqnarray*}}
\newcommand{\bea}{\begin{eqnarray}}
\newcommand{\eea}{\end{eqnarray}}

\begin{document}

\title[Sharp Bounds of $H_{2,2}(F)$ for $\beta$-Spirallike Mappings on Banach Spaces]{Second Hankel Determinant for $\beta$-Spirallike Convex Mappings in Complex Banach Spaces}
\author[ M.B. Ahamed, N. Sarkar and P. Das]{Molla Basir Ahamed, Nabadwip Sarkar and Pradip Das}

\address{Molla Basir Ahamed, Department of Mathematics, Jadavpur  University, Kolkata-700032, West Bengal, India}
\email{mbahamed.math@jadavpuruniversity.in}

\address{Nabadwip Sarkar, Amity School of Applied Sciences, Amity University Mumbai, Panvel, Navi Mumbai, Maharashtra-410206, India}
\email{nsarkar@mum.amity.edu, nabadwipsarkar52@gmail.com}

\address{Pradip Das, Department of Mathematics, Raiganj University, Raiganj, West Bengal-733134, India.}
\email{pradipsmath@gmail.com}

\makeatletter
\@namedef{subjclassname@2020}{\textup{2020} Mathematics Subject Classification}
\makeatother

\subjclass[2020]{Primary 32H02; Secondary 30C45.}
\keywords{Second Hankel determinant, Complex Banach spaces, $\beta-$spirallike function, Quasi-convex mappings of type $B$, Homogeneous polynomial expansion}

\begin{abstract}
	We establish the bound for the second-order Hankel determinant $H_{2,2}(F) = A_2 A_4 - A_3^2$ 
	associated with the class $\mathcal{C}_{B}^{\beta}(\mathbb{B})$ of normalized $\beta$-spirallike quasi-convex 
	mappings of type $B$ on the open unit ball $\mathbb{B}$ of a complex Banach space. By utilizing a generalized framework 
	based on a directional slice homogeneous polynomial expansion, we eliminate the standard, restrictive 
	assumption that the mapping is of the form $F(x) = g(x)x$. Under these weaker operational conditions, 
	we parameterize the targeted scalar invariants $A_n$ via the classical Carath\'{e}odory functional parameters. 
	A rigorous optimization analysis proves that the established upper bound is strictly sharp for the classical 
	non-spirallike case $\beta = 0$, yielding a maximal value of $1/8$. This sharp bound is verified by constructing 
	explicit multi-dimensional extremal mappings that lift the corresponding single-variable convex profile. 
	Finally, an unresolved open question regarding the exact variational behavior for $\beta \neq 0$ is formulated.
\end{abstract}

\maketitle

\section{\bf Introduction}It is well known that many classical results in the theory of a single complex variable do not naturally extend to higher dimensions. For instance, Cartan~\cite{Cartan-1933} pointed out the failure of the celebrated Bieberbach conjecture in several complex variables. This failure, along with other classical counterexamples, suggests that imposing additional geometric conditions, such as convexity or starlikeness, is essential when extending one-dimensional results to higher dimensions.\vspace{1.2mm}

{In \cite{Gong1999}, Gong posed the following conjecture.
	\begin{conj}\label{Conj-1.1} If $f : \mathbb{U}^n \to \mathbb{C}^n$ is a normalized biholomorphic starlike mapping, then
		\[\frac{\|D^m f(0)(z^m)\|}{m!} \le m \|z\|^m,\; z \in \mathbb{U}^n, \; m = 2, 3, \dots	\]
	\end{conj}
	In view of a result in \cite{Roper-TAMS-1999}, Conjecture~\ref{Conj-1.1} does not hold in general for normalized biholomorphic starlike mappings on $\mathbb{B}^n$ in $\mathbb{C}^n$ (see \cite{GHK2002}). So, in \cite{GHK2002}, Graham \textit{et al.} proposed the following conjecture.
	\begin{conj}\label{Conj-1.2}
		If $f : \mathbb{B}_n \to \mathbb{C}^n$ is a normalized biholomorphic starlike mapping, then
		\[
		\frac{|T_x(D^m f(0)(x^m))|}{m!} \le m \|x\|^m,\; x \in \mathbb{B}_n,\;T_x \in T(x),\; m = 2, 3, \dots
		\]
		where $\mathbb{B}_n$ is the unit ball of $\mathbb{C}^n$ with respect to an arbitrary norm.
	\end{conj}
	We remark that both Conjecture \ref{Conj-1.1} and Conjecture \ref{Conj-1.2} are called the Bieberbach conjectures in several complex variables. Some best-possible results concerning the coefficient estimates for subclasses of holomorphic mappings in several variables were obtained in the works of Graham et al. \cite{GHHK2014,GHK2002,GKK2003}, Hamada and Honda \cite{HH2008}, Hamada \textit{et al.} \cite{HHK2006}, Kohr \cite{Kohr1998}, Liu \textit{et al.} \cite{LLX2015} and Xu and Liu \cite{XL2009a}.\vspace{1.2mm}
	
 It is natural to ask why we transition from the classical single complex variable setting to a setting of complex Banach spaces, and how the concept of differentiation adapts to this infinite-dimensional setting. \vspace{1.2mm}
	
	In a single complex variable, the derivative of a function $f: \mathbb{C} \to \mathbb{C}$ at a point $z_0$ is simply a complex number $f'(z_0)$. This representation is highly specialized because the space of linear transformations from $\mathbb{C}$ to $\mathbb{C}$ is isomorphic to $\mathbb{C}$ itself. However, when studying physical systems, control theory, or functional equations, state spaces are frequently infinite-dimensional. To analyze holomorphic dynamics and geometric properties in these settings, we require the framework of a complex Banach space $X$ endowed with the norm $\|\cdot\|$. \vspace{1.2mm}
	
	Going to higher dimensions or infinite-dimensional spaces changes the fundamental nature of the derivative. The derivative is no longer a scalar; it is a continuous linear operator. For a holomorphic mapping $g: \mathbb{B} \to X$, where $\mathbb{B} = \{z \in X : \Vert{}z\Vert{} < 1\}$,  the Fréchet derivative $Dg(z)$ at a point $z \in B$ is a bounded linear operator from $X$ into $X$ (i.e., $Dg(z) \in \mathcal{L}(X, X)$). It represents the best local linear approximation of the mapping $g$ near $z$, satisfying
	\[
	\lim_{h \to 0} \frac{\|g(z+h) - g(z) - Dg(z)(h)\|}{\|h\|} = 0.
	\]
	For the finite-dimensional case, let $\mathbb{C}^n$ represent the space of $n$ complex variables, where elements are written as column vectors $z = (z_1, z_2, \dots, z_n)^T$. Let $\mathbb{U}^n$ denote the open unit polydisc in $\mathbb{C}^n$. We denote the boundary and the distinguished boundary of $\mathbb{U}^n$ by $\partial \mathbb{U}^n$ and $\partial_0 \mathbb{U}^n$, respectively.\vspace{1.2mm} 
	
	In particular, when $X = \mathbb{C}^n$, every linear operator can be identified with an $n \times n$ matrix with respect to the standard basis. Thus, the Fréchet derivative $Dg(z)$ can be concretely realized as the classical Jacobian 
	matrix
	\[
	Dg(z) = \left[ \frac{\partial g_j(z)}{\partial z_k} \right]_{1 \le j, k \le n}.
	\]
	Similarly, the $m$-th order Fréchet derivative $D^m g(z)$ is a bounded symmetric $m$-linear operator. When evaluated at the vector $a \in \mathbb{C}^n$ for its first $m-1$ arguments, the resulting object $D^m g(z)(a^{m-1}, \cdot)$ becomes a linear operator from $\mathbb{C}^n$ to $\mathbb{C}^n$. Consequently, it admits the matrix representation
	\[
	D^m g(z)(a^{m-1}, \cdot) = \left[ \sum_{p_1, \dots, p_{m-1}=1}^n \frac{\partial^m g_j(z)}{\partial z_k \partial z_{p_1} \dots \partial z_{p_{m-1}}} a_{p_1} \dots a_{p_{m-1}} \right]_{1 \le j, k \le n}.
	\]
	This formulation bridges the abstract coordinate-free calculus on complex Banach spaces with the classical coordinate-based matrix calculus in $\mathbb{C}^n$.}
	{\begin{exm} Let $z = (z_1, z_2, z_3)^T \in \mathbb{C}^3$ and $a = (a_1, a_2, a_3)^T \in \mathbb{C}^3$.We define the quadratic mapping $g(z) = (g_1(z), g_2(z), g_3(z))^T$ by 
			\begin{align*}
				g(z) = \begin{pmatrix} z_1^2 + z_2 z_3 \\ z_2^2 \\ z_3^2 + z_1 z_2 \end{pmatrix}.
			\end{align*}
			For $m=2$, the term inside the matrix elements is a single sum over $p_1$ from $1$ to $3$, 
			\begin{align*}
				\sum_{p_1=1}^3 \frac{\partial^2 g_j(z)}{\partial z_k \partial z_{p_1}} a_{p_1}.
			\end{align*}
			The second Fréchet derivative operator $D^2 g(z)(a, \cdot)$ is represented by the following matrix
			\[
			D^2 g(z)(a, \cdot) = 
			\begin{pmatrix}
				2a_1 & a_3 & a_2 \\[1.5ex]
				0 & 2a_2 & 0 \\[1.5ex]
				a_2 & a_1 & 2a_3
			\end{pmatrix}.
			\]
	\end{exm}
	We denote by $\mathcal{L}(X,Y)$ the Banach space of all bounded linear operators from $X$ into another complex Banach space $Y$. The identity operator on $X$ is denoted by $I$.\vspace{1.2mm}
	
	For each $x \in X \setminus \{0\}$, we define the set
	\[
	T_x = \{ l_x \in L(X, \mathbb{C}) : l_x(x) = \|x\|, \, \|l_x\| = 1 \},
	\]
	where $\mathcal{L}(X, Y)$ denotes the space of all continuous linear operators from a complex Banach space $X$ into a complex Banach space $Y$. Let $I$ denote the identity operator on $X$. By the Hahn-Banach theorem, the set $T_x$ is nonempty for every $x \in X \setminus \{0\}$.\vspace{1.2mm}
	
	Let $\mathcal{H}(\Omega, \Omega')$ denote the space of all holomorphic mappings from a domain $\Omega \subseteq X$ into a domain $\Omega' \subseteq Y$, and write $\mathcal{H}(\Omega) := \mathcal{H}(\Omega, X)$. For a mapping $g \in \mathcal{H}(\mathbb{B})$ and each $k \in \mathbb{N}$, there exists a bounded symmetric $k$-linear mapping 
	\[
	D^k g(z) : \prod_{j=1}^k X \to X,
	\]
	which is the $k$-th order Fréchet derivative of $g$ at $z$, such that $g$ admits the Taylor series expansion
	\[
	g(w) = \sum_{k=0}^{\infty} \frac{1}{k!} D^k g(z)\left((w - z)^k\right)
	\]
	for all $w$ in a neighborhood of $z$. Here, we define $D^0 g(z)\left((w - z)^0\right) = g(z)$, and for $k \ge 1$, 
	\[
	D^k g(z)\left((w - z)^k\right) = D^k g(z)(\underbrace{w - z, w - z, \dots, w - z}_{k \text{ times}}).
	\]
	
	On a bounded circular domain $\Omega \subset \mathbb{C}^n$, the first Fréchet derivative $Dg(z)$ and the $m$-th Fréchet derivative $D^m g(z)(a^{m-1}, \cdot)$ of a holomorphic mapping $g : \Omega \to \mathbb{C}^n$ are represented in matrix form as
	\[
	Dg(z) = \left[ \frac{\partial g_j(z)}{\partial z_k} \right]_{1 \le j, k \le n}
	\]
	and
	\[
	D^m g(z)(a^{m-1}, \cdot) = \left[ \sum_{p_1, \dots, p_{m-1}=1}^n \frac{\partial^m g_j(z)}{\partial z_k \partial z_{p_1} \dots \partial z_{p_{m-1}}} a_{p_1} \dots a_{p_{m-1}} \right]_{1 \le j, k \le n},
	\]
	respectively, where $g(z) = (g_1(z), \dots, g_n(z))^T$ and $a = (a_1, \dots, a_n)^T \in \mathbb{C}^n$.\vspace{1.2mm}
	
	A mapping $g \in \mathcal{H}(\mathbb{B})$ is said to be normalized if $g(0) = 0$ and $Dg(0) = I$. A holomorphic mapping $g : \Omega \to X$ is biholomorphic if its inverse $g^{-1}$ exists and is holomorphic on $g(\Omega)$. If $Dg(z)$ has a bounded inverse for each $z \in \Omega$, then $g$ is said to be locally biholomorphic. We denote by $\mathcal{S}(\mathbb{B})$ the class of all normalized biholomorphic mappings from the unit ball $\mathbb{B}$ into $X$.}
\begin{defi} A mapping $F\in\mathcal{H}(\mathbb{B})$ is called \emph{biholomorphic} if $F(\mathbb{B})$ is a domain in $X$ and the inverse mapping
\[
F^{-1}:F(\mathbb{B})\rightarrow \mathbb{B}
\]
exists and is holomorphic. Moreover, $F$ is said to be \emph{locally biholomorphic} whenever the Fr\'echet derivative $DF(x)$ is invertible with bounded inverse for every $x\in \mathbb{B}$.
\end{defi}

A holomorphic mapping $F:B\rightarrow X$ is said to be \emph{normalized} if
\[
F(0)=0\;\text{and}\;DF(0)=I.
\]
Several important subclasses of normalized biholomorphic mappings in Banach spaces can be found in the monograph~\cite{GK2003}.\vspace{1.2mm}

Let $x_0$ be an arbitrary vector on the unit sphere $\partial\mathbb{B}$ and let $T_{x_0}$ be an associated Hahn--Banach functional. Correspondingly, we define the baseline scalar invariant $A_1=1$, and introduce the higher-order Taylor coefficients for $n=2,3,4, \ldots$, via the directional evaluations
\begin{align}
	\label{eq:An}
	A_n=\frac{1}{n!}T_{x_0}\left(D^nF(0)(x_0,\ldots,x_0)
	\right),
\end{align}
where the vector $x_0$ appears exactly $n$ times in the multilinear form $D^nF(0)$.\vspace{1.2mm}

We shall also require the following definitions.

\begin{defi}\cite{G2003}
Let $F:\mathbb{B} \rightarrow  X$ be a normalized locally biholomorphic mapping. If $Re\;\{T_x((DF(x))^{-1}(D^2F(x)(x^2)+DF(x)x)\}\geq 0$, $x\in \mathbb{B}\setminus\{0\}$, $T_x\in T(x)$, then F is called quasi-convex mapping of type $B$ on $\mathbb{B}$.
\end{defi}
The following definition extends the class of quasi-convex mappings of type B by introducing a spiral parameter.

\begin{defi}
Let \(X\) be a complex Banach space, \(\mathbb{B}\) be the unit ball of \(X\), and let
\(F:\mathbb{B}\to X\) be a normalized locally biholomorphic mapping. Let
\(\beta\in\mathbb{R}\) satisfy $|\beta|<{\pi}/{2}.$
Then \(F\) is called a $\beta$-\emph{spirallike quasi-convex mapping of type B} on
\(\mathbb{B}\) if
\[
\operatorname{Re}\left\{
e^{i\beta}T_x\!\left((DF(x))^{-1}\bigl(D^2F(x)(x,x)+DF(x)x\bigr)
\right)
\right\}\geq 0,
\]
for every \(x\in\mathbb{B}\setminus\{0\}\) and every \(T_x\in T(x)\). We denote this class by $\widehat{\mathcal{C}}_{\beta}(\mathbb{B})$.
\end{defi}
\begin{rem}
The class of spirallike quasi-convex mappings of type~B is a natural generalization of the class of quasi-convex mappings of type~B. Indeed, when \(\beta=0\), the above definition reduces to the class of quasi-convex mappings of type~B. Furthermore, when \(X=\mathbb{C}\) and \(\mathbb{B}=\mathbb{U}:=\{z\in\mathbb{C}:|z|<1\}\), the defining condition becomes
\[
\operatorname{Re}\left\{
e^{i\beta}\left(1+\frac{zf''(z)}{f'(z)}\right)
\right\}\ge0,
\]
which is precisely the class of \(\beta\)-spirallike convex functions in the unit disk. Thus, the present definition extends both quasi-convex mappings of type~B in complex Banach spaces and \(\beta\)-spirallike convex functions in one complex variable.
\end{rem}

Many authors have investigated various subclasses of biholomorphic mappings in Banach spaces and their associated coefficient problems. Significant contributions in this direction can be found in \cite{Chirila2014,GHK2002,GHKK2017,GK2003,GKK2003,H2023,HH2008,HHK2006,HKK2021,Kohr1998}.

\subsection{Hankel determinant for a subclass of $\mathcal{A}$}

Let $\mathcal{A}$ denote the family of analytic functions in the open unit disk $\mathbb{U},$
normalized by
\begin{align}
	\label{Eq-1.1}
	f(z)=z+\sum_{n=2}^{\infty}a_nz^n.
\end{align}
Furthermore, let $\mathcal{S}\subset\mathcal{A}$ be the class of normalized univalent functions, and let $\mathcal{K}$ denote the subclass of $\mathcal{S}$ consisting of convex functions.\vspace{1.2mm}

Throughout this paper, we shall frequently use the classical Carath\'eodory class, denoted by $\mathcal{P}$, which consists of analytic functions $p$ satisfying
\[
p(0)=1\;\text{and}\;
\operatorname{Re}p(z)>0,\; z\in\mathbb{U}.
\]
Every function $p\in\mathcal{P}$ admits the Taylor expansion
\begin{align*}
	\label{eq:P}
	p(z)=1+\sum_{n=1}^{\infty}p_nz^n
	=1+p_1z+p_2z^2+p_3z^3+\cdots,\; z\in\mathbb{U}.
\end{align*}
The Hankel determinant \(H_{q,n}(f)\) of a function \(f\in\mathcal{A}\) given by \eqref{Eq-1.1} is defined by
\[
H_{q,n}(f)
=
\begin{vmatrix}
a_n & a_{n+1} & \cdots & a_{n+q-1}\\
a_{n+1} & a_{n+2} & \cdots & a_{n+q}\\
\vdots & \vdots & \ddots & \vdots\\
a_{n+q-1} & a_{n+q} & \cdots & a_{n+2(q-1)}
\end{vmatrix},
\]
where \(a_1=1\) and \(n,q\in\mathbb{N}\).

\vspace{2mm}

\noindent In particular, the second-order Hankel determinants are given by
\[
H_{2,2}(f)
=
\begin{vmatrix}
a_2 & a_3\\
a_3 & a_4
\end{vmatrix}
=a_2a_4-a_3^2,\;\;
H_{2,1}(f)
=
\begin{vmatrix}
a_1 & a_2\\
a_2 & a_3
\end{vmatrix}
=a_3-a_2^2.
\]
In recent years, the problem of determining sharp upper bounds for the Hankel determinant
\(|H_{q,n}(f)|\) has attracted considerable attention in geometric function theory. In particular,
the functional $H_{2,1}(f)$ coincides with the classical Fekete--Szeg\"o functional introduced by Fekete and Szeg\"o
\cite{FS1933}. For the class $\mathcal{S}$, this functional was first estimated by Bieberbach (see \cite[Vol.~I, p.~35]{Goodman1983}). Subsequently, Pommerenke \cite{Pommerenke1966} established a fundamental result for the class $\mathcal{S}$, which stimulated extensive research on analogous coefficient problems for various subclasses of univalent functions. More recently, considerable effort has been devoted to obtaining sharp estimates for the second Hankel determinant $H_{2,2}(f),$ and several significant results have been reported in the literature (see, e.g., \cite{CKKLS2018,CKKLS2017,XDL2026,XHX2025}).\vspace{1.2mm}

In  \cite{KR2015}, Krishna and Reddy established the following  estimate for the second Hankel determinant of $\beta-$spirallike convex functions.
\begin{theoA}\cite[Theorem 3.3]{KR2015}
If $f(z)=z+\sum_{n=2}^{\infty}a_nz^n
\in\widehat{\mathcal{C}}_{\beta},
\;|\beta|\le {\pi}/{2},$
then
\[
|a_2a_4-a_3^2|\le \frac{17(1+\cos^2\beta)+2\cos \beta}{144(1+\sec^2\beta)}.
\]
\end{theoA}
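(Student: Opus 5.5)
The starting point is the defining condition of $\widehat{\mathcal{C}}_{\beta}$: for $f(z)=z+\sum_{n\ge2}a_nz^n$ it says that $e^{i\beta}\bigl(1+zf''(z)/f'(z)\bigr)$ has nonnegative real part in $\mathbb{U}$. After normalizing at $z=0$ this gives a Carath\'eodory function. Write
\[
e^{i\beta}\Bigl(1+\frac{zf''(z)}{f'(z)}\Bigr)=\cos\beta\, p(z)+i\sin\beta,\qquad p\in\mathcal{P},\ p(z)=1+p_1z+p_2z^2+p_3z^3+\cdots.
\]

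Comparing coefficients of $z,z^2,z^3$ on both sides expresses the $a_n$ in terms of $p_1,p_2,p_3$ and the factor $\tau:=e^{-i\beta}\cos\beta$:
\[
2a_2=\tau p_1,\qquad 6a_3=\tau\bigl(p_2+\tau p_1^2\bigr),\qquad 12a_4=\tau p_3+\tfrac{3}{2}\tau^2 p_1p_2+\tfrac12\tau^3p_1^3 .
\]
These should be checked by expanding $zf''/f'$ to third order. Substituting them gives $a_2a_4-a_3^2$ as a polynomial in $p_1,p_2,p_3$ with coefficients involving powers of $\tau$.

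Next I would use the Libera--Z{\l}otkiewicz representation. Since the class and the functional are rotation invariant, we may assume $p_1=c\in[0,2]$. Then
\[
2p_2=c^2+x(4-c^2),\qquad 4p_3=c^3+2c(4-c^2)x-c(4-c^2)x^2+2(4-c^2)(1-|x|^2)\zeta,
\]
with $|x|\le1$ and $|\zeta|\le1$. Inserting these, applying the triangle inequality, and using $|\tau|=\cos\beta$, $|\zeta|\le1$ produces an upper bound $G(c,|x|)$ on $[0,2]\times[0,1]$.

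The main obstacle is maximizing $G$. The $\beta$-dependent terms (from $\tau^2$ and $\tau^3$) do not combine as cleanly as in the convex case. I would first show that $\partial G/\partial|x|\ge0$, so that the maximum sits at $|x|=1$. That reduces the problem to a one-variable polynomial in $c$, whose critical point should be found explicitly. The claimed bound $\frac{17(1+\cos^2\beta)+2\cos\beta}{144(1+\sec^2\beta)}$ should appear at that interior critical point.

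Two consistency checks guard the computation. At $\beta=0$ the bound becomes $36/288=1/8$, which is the known sharp bound for $\mathcal{K}$. At $|\beta|=\pi/2$ the bound vanishes, which matches $\tau=0$. If the triangle-inequality estimate turns out to be too lossy, I would keep the $\tau$-phases and instead bound the real part. I would also retain $\cos^2\beta$ as a separate parameter, since that is how the factor $1+\sec^2\beta$ should enter.
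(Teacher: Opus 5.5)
Your normalization and your formulas for $a_2,a_3,a_4$ are correct. At $\beta=0$ they are exactly the ones used in Proposition~\ref{Prop-4.1}; the paper's formulas for $A_2,A_3,A_4$ in the proof of Theorem~\ref{T2} are not correct. The gap is in the step you flag yourself: carried out honestly, the triangle-inequality reduction does not produce the stated constant. After the Libera--Z{\l}otkiewicz substitution one has exactly
\[
\frac{144\,(a_2a_4-a_3^2)}{\tau^2}=\frac{(1-\tau)(1+2\tau)}{2}c^4+\frac{2+\tau}{2}c^2(4-c^2)x-(4-c^2)\Bigl(4+\frac{c^2}{2}\Bigr)x^2+3c(4-c^2)(1-|x|^2)\zeta ,
\]
with $|1-\tau|=|\sin\beta|$, $|1+2\tau|=\sqrt{1+8\cos^2\beta}$ and $|2+\tau|=\sqrt{4+5\cos^2\beta}=:r$. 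Monotonicity in $|x|$ is fine. At $|x|=1$ the triangle inequality gives the majorant $16+2(r-1)c^2-\frac12(1+r-s)c^4$, where $s:=|\sin\beta|\sqrt{1+8\cos^2\beta}$. Its maximum is $16+2(r-1)^2/(1+r-s)$. This is about $18+\frac32|\beta|$ for small $\beta\neq0$, and about $18.44$ at $\beta=\pi/4$. The statement requires $17+\frac{2\cos\beta}{1+\cos^2\beta}$, which is $<18$ for every $\beta\neq0$ (about $17.94$ at $\pi/4$). So the target value does not appear at the critical point. Your endpoint checks cannot catch this, because both expressions equal $18$ at $\beta=0$ and tend to $17$ as $\cos\beta\to0$.

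The stated constant is precisely $\frac{\cos^2\beta}{144}\max_c\bigl[16+2(1+\cos\beta)c^2-(1+\cos^2\beta)c^4\bigr]$, with the maximum at $c^2=\frac{1+\cos\beta}{1+\cos^2\beta}$. That is what you get by replacing $\tau$ with the real number $\cos\beta$ before taking moduli. The paper makes the same move in the proof of Theorem~\ref{T2}: it uses $|e^{-in\beta}|=1$ inside the modulus and then defers to Krishna--Reddy. The paper itself gives no proof of Theorem~A.

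That replacement is not an inequality. For $p(z)=\frac{1+z}{1-z}$ (so $c=2$), the quantity $144|a_2a_4-a_3^2|/\cos^2\beta$ equals $8|\sin\beta|\sqrt{1+8\cos^2\beta}$. The substituted majorant gives only $8(1-\cos\beta)(1+2\cos\beta)$; at $\beta=\pi/4$ these are about $12.6$ versus $5.7$.

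What is missing is a genuinely phase-sensitive estimate. For example, you would need to maximize $\bigl|\alpha c^4\bar w+\gamma c^2(4-c^2)-(4-c^2)(4+\frac{c^2}{2})w\bigr|$ over $|w|=1$, where $\alpha=\frac{(1-\tau)(1+2\tau)}{2}$ and $\gamma=\frac{2+\tau}{2}$, and also treat the case $|x|<1$. Numerically this maximum seems to stay below the target (roughly $17.1$ at $\beta=\pi/4$), so the inequality is plausibly true but not sharp for $\beta\neq0$. Your fallback ``bound the real part'' does not supply this argument. For $\beta=0$ your plan goes through verbatim and matches Proposition~\ref{Prop-4.1}: the majorant is $16+4c^2-2c^4$ (half of the paper's $G$), with maximum $18$ at $c=1$, $x=-1$.
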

\subsection{Objectives of the paper} This paper is motivated by a fundamental open problem in multi-dimensional geometric function theory: whether the sharp one-dimensional estimate established in \cite[Theorem 3.3]{KR2015} can be generalized to holomorphic mappings defined on the unit ball of a complex Banach space. \vspace{1.2mm}

To address this, we systematically extend the theory of higher-order Hankel determinants to infinite-dimensional settings. Specifically, the primary objectives of this paper are as follows:

\begin{itemize}
	\item To establish a sharp upper bound for the second Hankel determinant $H_{2,2}(F)$ associated with the class $\widehat{\mathcal{C}}_{\beta}(\mathbb{B})$ of normalized $\beta$-spirallike quasi-convex mappings of type \(B\) on the unit ball of a complex Banach space.\vspace{1.2mm}
	
	\item  To prove these bounds under a significantly weaker homogeneous polynomial expansion framework, thereby removing the restrictive assumption that the mapping is of the form $F(x) = g(x)x$.\vspace{1.2mm}
	
	\item  To provide a complete sharpness analysis for the classical case $\beta = 0$ by constructing concrete, multi-dimensional extremal mappings that directly generalize the one-dimensional case.
\end{itemize}
 
 We organize the paper as follows. In Section \ref{Sec-3}, we present and prove our main results, establishing the sharp upper bounds for the second-order Hankel determinant under a generalized homogeneous polynomial framework that circumvents restrictive classical assumptions. Section \ref{Sec-2} is dedicated to establishing several crucial auxiliary lemmas that provide the operator-theoretic foundation and dimensional lifting mechanisms necessary for our analysis. Finally, Section \ref{Sec-4} provides a comprehensive sharpness analysis for the non-spirallike case ($\beta = 0$) by constructing concrete multi-dimensional extremal mappings, and concludes with the formulation of an open research question regarding the variational behavior for the non-vanishing parameter case.

\section{{\bf Main Results}}\label{Sec-3}
In this section, we establish a result finding a bound of the second Hankel determinant for the class $\widehat{\mathcal{S}}_{\beta}(\mathbb{B})$ of spirallike mappings on complex Banach spaces. 
\begin{theo}\label{T1}
Let $g\in H(\mathbb{B},\mathbb{C})$ satisfy $g(0)=1$, and define $F(x)=g(x)x, x\in\mathbb{B}.$
Suppose that $F\in\widehat{\mathcal{C}}_{\beta}(\mathbb{B})$, $|\beta|\leq {\pi}/{2}$. Then, for every
$x_{0}\in X$ with $\|x_{0}\|=1$,
\begin{align*}
	|H_{2,2}(F)|\leq  \frac{17(1+\cos^2\beta)+2\cos \beta}{144(1+\sec^2\beta)},
\end{align*}
where 
\begin{align*}
	H_{2,2}(F)=
	\begin{vmatrix}
		A_{2} & A_{3} \\
		A_{3} & A_{4} \\
	\end{vmatrix}
	=A_2A_4-A_3^2,
\end{align*}
with  $A_{2},A_{3},A_{4}$ defined by
\eqref{eq:An}. 
\end{theo}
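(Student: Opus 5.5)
The plan is to reduce the statement to the one-variable estimate of Theorem A by restricting $F$ to the complex line through $x_0$. Fix $x_0\in X$ with $\|x_0\|=1$ and $T_{x_0}\in T(x_0)$. Define $h(\zeta)=g(\zeta x_0)$ and $f(\zeta)=\zeta h(\zeta)$ for $\zeta\in\mathbb{U}$. Then $f$ is holomorphic on $\mathbb{U}$ with $f(0)=0$ and $f'(0)=g(0)=1$, so $f\in\mathcal{A}$; write $f(\zeta)=\zeta+\sum_{n\ge2}a_n\zeta^n$.

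\emph{Step 1: $A_n=a_n$.} Since $F(\zeta x_0)=f(\zeta)x_0$, comparing the homogeneous expansion of $F$ along the line $\zeta x_0$ with the power series of $f$ gives
\[
\frac{1}{n!}D^nF(0)(x_0^n)=a_n x_0 .
\]
Applying $T_{x_0}$ and using $T_{x_0}(x_0)=1$ yields $A_n=a_n$ for $n\ge 2$. Hence $H_{2,2}(F)=a_2a_4-a_3^2=H_{2,2}(f)$.

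\emph{Step 2: the operator expression along the line.} For $x\neq 0$ we have $DF(x)y=g(x)y+(Dg(x)y)\,x$ and
\[
D^2F(x)(x,x)=\bigl(2Dg(x)x+D^2g(x)(x,x)\bigr)x .
\]
So $D^2F(x)(x,x)+DF(x)x$ is a scalar multiple of $x$. Since $DF(x)(\lambda x)=\lambda\bigl(g(x)+Dg(x)x\bigr)x$, local biholomorphy forces $g(x)+Dg(x)x\neq0$, and then $(DF(x))^{-1}(\lambda x)=\lambda x/(g(x)+Dg(x)x)$. Therefore
\[
(DF(x))^{-1}\bigl(D^2F(x)(x,x)+DF(x)x\bigr)=\frac{g(x)+3Dg(x)x+D^2g(x)(x,x)}{g(x)+Dg(x)x}\,x .
\]
Now put $x=\zeta x_0$ with $\zeta\neq0$. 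Then $Dg(x)x=\zeta h'(\zeta)$ and $D^2g(x)(x,x)=\zeta^2h''(\zeta)$. Since $f'=h+\zeta h'$ and $\zeta f''=2\zeta h'+\zeta^2h''$, the scalar factor above equals $1+\zeta f''(\zeta)/f'(\zeta)$. In particular $f'(\zeta)\neq0$ on $\mathbb{U}$.

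\emph{Step 3: $f$ is $\beta$-spirallike convex.} The functional $T_x=(|\zeta|/\zeta)\,T_{x_0}$ belongs to $T(x)$, because it has norm $1$ and $T_x(x)=|\zeta|$. Applying $T_x$ to the identity of Step 2 gives $|\zeta|\bigl(1+\zeta f''/f'\bigr)$. The defining inequality of $\widehat{\mathcal{C}}_\beta(\mathbb{B})$ then becomes
\[
\operatorname{Re}\bigl\{e^{i\beta}\bigl(1+\zeta f''(\zeta)/f'(\zeta)\bigr)\bigr\}\ge0,\qquad \zeta\in\mathbb{U}\setminus\{0\}.
\]
At $\zeta=0$ the bracket equals $e^{i\beta}$, with real part $\cos\beta>0$ when $|\beta|<\pi/2$. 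The minimum principle for harmonic functions then upgrades the inequality to a strict one, so $f\in\widehat{\mathcal{C}}_\beta$ in the sense of the one-variable class.

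\emph{Step 4: conclusion.} Theorem A applied to $f$, combined with Step 1, yields the claimed bound. The main obstacle is Step 2: one must check that the inverse of $DF(x)$ really acts as stated on multiples of $x$, which relies on $g(x)+Dg(x)x\neq0$ coming from local biholomorphy.

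The endpoint $|\beta|=\pi/2$ is degenerate. There $\sec\beta$ is undefined, the definition of the class already requires $|\beta|<\pi/2$, and for such $\beta$ the right-hand side is read as its limit $17/144$ (the limit of the formula as $\cos\beta\to0$). I would either exclude this case or treat it by that limiting interpretation.
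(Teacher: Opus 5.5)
Your proposal is correct and follows the paper's route. You restrict to the slice $f(\zeta)=g(\zeta x_0)\zeta=T_{x_0}(F(\zeta x_0))$, identify $A_n=a_n$, obtain $f\in\widehat{\mathcal{C}}_{\beta}$ from the forward half of Lemma~\ref{L2}, and apply Theorem~A; you reprove that half of Lemma~\ref{L2} inline, and your version is slightly more careful than the paper's, since you compute $(DF(x))^{-1}$ only on multiples of $x$ and upgrade $\ge 0$ to $>0$ by the minimum principle. One correction to your endpoint aside: as $\cos\beta\to0$ the right-hand side tends to $0$, not $17/144$, because the denominator $144(1+\sec^2\beta)$ blows up; this matches the fact that at $|\beta|=\pi/2$ your minimum-principle argument would force $1+\zeta f''(\zeta)/f'(\zeta)\equiv 1$, hence $f(\zeta)=\zeta$ and $H_{2,2}(F)=0$.
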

{By specializing the underlying complex Banach space to the finite-dimensional setting $X = \mathbb{C}^n$ and for the spirallike parameter $\beta = 0$, the geometric properties governed by Definition 1.3 naturally reduce from the general class $\widehat{\mathcal{C}}_{\beta}(\mathbb{B})$ to the classical family of quasi-convex mappings of type B. Under these operational constraints, the target scalar invariants $A_n$ are uniquely characterized along the directional vector $x_0 \in \mathbb{C}^n$ via the supporting linear functional $T_{x_0}$, yielding the following immediate consequence of Theorem \ref{T1}.
	\begin{cor}\label{Cor-3.1}
		Let $\mathbb{B}_n$ be the open unit ball of the finite-dimensional complex Banach space $\mathbb{C}^n$. Let $g \in \mathcal{H}(\mathbb{B}_n, \mathbb{C})$ satisfy $g(0)=1$, and define the normalized biholomorphic mapping $F(x) = g(x)x$ for $x \in \mathbb{B}_n$. If $F$ is a quasi-convex mapping of type B on $\mathbb{B}_n$ (corresponding to the vanishing spiral parameter $\beta = 0$), then for every $x_0 \in \mathbb{C}^n$ with $\Vert x_0 \Vert = 1$, we have  
		$$\vert H_{2,2}(F) \vert = \vert A_2 A_4 - A_3^2 \vert \le \frac{1}{8},$$
		where the targeted scalar invariants $A_2, A_3$, and $A_4$ are defined via the directional evaluations:  
		$$A_n = \frac{1}{n!} T_{x_0} \left( D^n F(0)(x_0, \dots, x_0) \right),\; n = 2, 3, 4,$$
		with $T_{x_0} \in T(x_0)$ being an associated supporting linear functional. Moreover, this upper bound is strictly sharp, and an explicit multidimensional extremal mapping is given by:  
		$$F_0(x) = x + \frac{1}{2}\big(T_{x_0}(x)\big)x - \frac{1}{4}\big(T_{x_0}(x)\big)^3x + \dots$$
\end{cor}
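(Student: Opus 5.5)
The inequality follows from Theorem~\ref{T1} once we set $\beta=0$. Then $\cos\beta=\sec\beta=1$, and the bound in Theorem~\ref{T1} becomes
\[
\frac{17\cdot 2+2}{144\cdot 2}=\frac{36}{288}=\frac18 .
\]
For $\beta=0$ the class $\widehat{\mathcal{C}}_{0}(\mathbb{B}_n)$ is exactly the class of quasi-convex mappings of type $B$ (Definition of quasi-convexity and the subsequent Remark). Also $X=\mathbb{C}^n$ with any norm is a complex Banach space. So the hypotheses of Theorem~\ref{T1} hold verbatim, and $|A_2A_4-A_3^2|\le 1/8$ for every unit vector $x_0$ and every $T_{x_0}\in T(x_0)$. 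The real content of the corollary is therefore the sharpness claim.

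For sharpness I would reduce to one variable along the slice through $x_0$. For a mapping of the form $F(x)=g(x)x$, put
\[
f(\zeta)=T_{x_0}\bigl(F(\zeta x_0)\bigr)=\zeta\, g(\zeta x_0), \qquad \zeta\in\mathbb{U}.
\]
Since $\frac{1}{n!}D^nF(0)(x_0^n)$ is the $\zeta^n$-coefficient of $F(\zeta x_0)$, we get $A_n=a_n(f)$. So it suffices to produce a convex function $f(\zeta)=\zeta+a_2\zeta^2+a_3\zeta^3+a_4\zeta^4+\cdots$ with $|a_2a_4-a_3^2|=1/8$, and then lift it. The proposed profile has $a_2=\tfrac12$, $a_3=0$, $a_4=-\tfrac14$, which gives $a_2a_4-a_3^2=-\tfrac18$. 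I would obtain $f$ from the Carath\'eodory representation
\[
1+\frac{\zeta f''(\zeta)}{f'(\zeta)}=p(\zeta), \qquad p\in\mathcal{P}.
\]
Under this representation,
\[
a_2=\frac{p_1}{2},\qquad a_3=\frac{p_2+p_1^2}{6},\qquad a_4=\frac{1}{24}\bigl(2p_3+3p_1p_2+p_1^3\bigr).
\]
I would then solve for $(p_1,p_2,p_3)$ realizing these values, and exhibit $p\in\mathcal{P}$ with those initial coefficients, for instance a convex combination of M\"obius kernels $(1+e^{i\theta}\zeta)/(1-e^{i\theta}\zeta)$. This would be checked against the equality cases of the Carath\'eodory–Toeplitz conditions used in the proof of Theorem~\ref{T1} at $\beta=0$.

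Next I would lift $f$ by setting
\[
F_0(x)=\frac{f\bigl(T_{x_0}(x)\bigr)}{T_{x_0}(x)}\,x ,
\]
i.e.\ $g(x)=f(T_{x_0}(x))/T_{x_0}(x)$, which has $g(0)=1$. This matches the displayed expansion of $F_0$, and $T_{x_0}(F_0(\zeta x_0))=f(\zeta)$, so $A_n(F_0)=a_n$ and $|H_{2,2}(F_0)|=1/8$.

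The main obstacle is verifying that $F_0$ is quasi-convex of type $B$ on all of $\mathbb{B}_n$, not just on the slice. For a mapping $F(x)=g(x)x$ one computes $DF(x)$ via the Sherman–Morrison formula, and the quantity
\[
T_x\Bigl((DF(x))^{-1}\bigl(D^2F(x)(x,x)+DF(x)x\bigr)\Bigr)
\]
reduces to a one-variable expression in $\lambda=T_{x_0}(x)$. I would aim to show that this expression equals $1+\lambda f''(\lambda)/f'(\lambda)$ times a positive factor of the form $\|x\|$, or more precisely something controlled by $|T_{x_0}(x)|\le\|x\|$. Its positivity would then follow from convexity of $f$ together with a maximum-principle argument on $|\lambda|\le\|x\|<1$. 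If this identity fails for a general norm, I would fall back to the Euclidean ball, where the computation is explicit, to certify sharpness.
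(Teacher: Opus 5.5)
Your proposal is correct and follows the paper's route: the bound is Theorem~\ref{T1} at $\beta=0$, and sharpness comes from the convex profile with Carath\'eodory data $(p_1,p_2,p_3)=(1,-1,-2)$, lifted to $F_0(x)=f_0(T_{x_0}(x))\,x/T_{x_0}(x)$. That data is realized by $p(\zeta)=(1-\zeta^2)/(1-\zeta+\zeta^2)$, i.e.\ $f_0'(\zeta)=1/(1-\zeta+\zeta^2)$. The step you flag as the main obstacle is exactly Lemma~\ref{L1}, whose Sherman--Morrison computation gives $T_x(\cdots)=\bigl(1+\lambda f_0''(\lambda)/f_0'(\lambda)\bigr)\|x\|$ for any norm, with $\lambda=T_{x_0}(x)\in\mathbb{U}$; so neither a maximum-principle argument nor a Euclidean fallback is needed.
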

\begin{proof}[\bf Proof of Theorem \ref{T1}]
Fix $x_{0}\in\partial\mathbb{B}$ and define $f(\xi)=g(\xi x_{0})\,\xi, \xi\in\mathbb{U}.$
Since $F\in\widehat{\mathcal{C}}_{\beta}(\mathbb{B})$, Lemma~\ref{L2}
implies that $f\in\widehat{\mathcal{C}}_{\beta}.$\vspace{1.2mm}

\noindent On the other hand, we have
\[
f(\xi)
=
T_{x_{0}}\bigl(F(\xi x_{0})\bigr),
\]
and hence,
\[
a_{2}
=
\frac{f''(0)}{2!}
=
\frac{1}{2!}
T_{x_{0}}
\left(
D^{2}F(0)(x_{0}^{2})
\right)
=
A_{2},
\]
\[
a_{3}
=
\frac{f^{(3)}(0)}{3!}
=
\frac{1}{3!}
T_{x_{0}}
\left(
D^{3}F(0)(x_{0}^{3})
\right)
=
A_{3},
\]
and \[
a_{4}
=\frac{f^{(4)}(0)}{4!}
=\frac{1}{4!}
T_{x_{0}}
\left(
D^{4}F(0)(x_{0}^{4})
\right)
=A_{4}.
\]
It is easy to see that $H_{2,2}(F) =H_{2,2}(f).$ Applying Theorem~A (\cite[Theorem 3.1]{KR2015}), we obtain
\[|H_{2,2}(F)|=|H_{3,1}(f)|\leq
 \frac{17(1+\cos^2\beta)+2\cos \beta}{144(1+\sec^2\beta)}.
\]
This completes the proof.
\end{proof}
\medskip
Next, removing the restrictive assumption $F(x)=g(x)x$, we generalize Theorem~A to higher dimensions under weaker assumptions than those of Theorem \ref{T1}. Assume that
\begin{align}
	\label{cc1}
	\frac{D^{k+1}F(0)\left(x^{k+1}\right)}{(k+1)!}
	= H_{F,k}(x)x,\; x\in X,\; k=1,2,3,
\end{align}
where $H_{F,k}(x)$ is a homogeneous polynomial of degree $k$ with values in $\mathbb{C}$. The fact that the assumption \eqref{cc1} is weaker than that of Theorem \ref{T1} is justified in \cite{EJ2024,H2023}.
\begin{proof}[\bf Proof of Corollary \ref{Cor-3.1}]
	Let $x_0 \in \partial\mathbb{B}_n$ be fixed and define the single-variable function $f(\xi) = g(\xi x_0)\xi$ for $\xi \in \mathbb{U}$. Since $F(x) = g(x)x$ is a quasi-convex mapping of type B on $\mathbb{B}_n$, it corresponds to the class $\widehat{\mathcal{C}}_{\beta}(\mathbb{B}_n)$ with $\beta = 0$. By Lemma 2.2, the mapping $F \in \widehat{\mathcal{C}}_{0}(\mathbb{B}_n)$ if and only if $f$ belongs to the classical class of normalized convex functions $K$ (or equivalently, $\widehat{\mathcal{C}}_0$) in the unit disk $\mathbb{U}$.  A direct application of the Taylor series expansion yields the coefficient identities 
	\begin{align*}
		a_n = \frac{1}{n!}f^{(n)}(0) = A_n\; \mbox{for}\;n=2,3,4.
	\end{align*} Consequently, the second-order Hankel determinant satisfies the invariant relation 
	\begin{align*}
		H_{2,2}(F) = H_{2,2}(f) = a_2 a_4 - a_3^2.
	\end{align*} Specializing Theorem \ref{T1} to the case $\beta = 0$ yields that
	\begin{align*}
		\vert{}H_{2,2}(F)\vert{} \le \frac{17(1 + 1) + 2}{144(1 + 1)} = \frac{36}{288} = \frac{1}{8}.
	\end{align*}
	To establish sharpness, we set $\beta = 0$ in Proposition \ref{Prop-4.1}, choosing the extremal Carath'eodory parameters $c_1 = 1$, $c_2 = -1$, and $c_3 = -2$ generates the corresponding single-variable convex function 
	\begin{align*}
		f_0(\xi) = \xi + \frac{1}{2}\xi^2 - \frac{1}{4}\xi^4 + \dots,
	\end{align*} which realizes the bound $|a_2 a_4 - a_3^2| = 1/8$. Applying the dimensional lifting framework from Lemma \ref{L1} along the direction of the supporting functional$T_{x_0}$confirms that the multidimensional mapping 
	\begin{align*}
		F_0(x) = \frac{f_0(T_{x_0}(x))}{T_{x_0}(x)}x = x + \frac{1}{2}(T_{x_0}(x))x - \frac{1}{4}(T_{x_0}(x))^3x + \dots
	\end{align*}is an extremal mapping in$\mathbb{C}^n$. This completes the proof.
	\end{proof}}
\begin{theo}\label{T2}
Let $F$ be a locally biholomorphic mapping on $\mathbb{B}$, and suppose that $F$ satisfies the assumption \eqref{cc1}. If $F\in \widehat{\mathcal{C}}_{\beta}(\mathbb{B})$, $|\beta|\leq \frac{\pi}{2}$, then for every $x_0\in X$ with $\|x_0\|=1$, we have
\[
|H_{2,2}(F)|\leq  \frac{17(1+\cos^2\beta)+2\cos \beta}{144(1+\sec^2\beta)},
\]
where $H_{2,2}(F)$ is the second Hankel determinant given by 
\begin{align*}
	H_{2,2}(F)=
	\begin{vmatrix}
		A_2 & A_3\\
		A_3 & A_4\\
	\end{vmatrix}=A_2A_4-A_3^2
\end{align*} with $A_1=1$, and $A_2$, $A_3$, and $A_4$ are defined by
\eqref{eq:An}.
\end{theo}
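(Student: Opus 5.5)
The idea is to reduce Theorem~\ref{T2} to the one-variable estimate of Theorem~A. Under \eqref{cc1} we cannot simply restrict $F$ to the slice $\xi x_0$. Instead we work with the scalar function $p(x)=T_x\bigl((DF(x))^{-1}(D^2F(x)(x,x)+DF(x)x)\bigr)/\|x\|$ and compute only its homogeneous expansion up to order three.

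Fix $x_0\in\partial\mathbb{B}$ and $T_{x_0}\in T(x_0)$. Set
\[
q(\xi)=\frac{e^{i\beta}\,T_{x_0}\bigl((DF(\xi x_0))^{-1}(D^2F(\xi x_0)(\xi x_0,\xi x_0)+DF(\xi x_0)\xi x_0)\bigr)/\xi-i\sin\beta}{\cos\beta},
\qquad \xi\in\mathbb{U}.
\]
The value at $\xi=0$ is understood by continuity. The spirallike condition, applied with the functional $T_{\xi x_0}=(|\xi|/\xi)T_{x_0}\in T(\xi x_0)$, gives $\operatorname{Re}q\ge 0$ and $q(0)=1$. If $\operatorname{Re}q$ vanishes somewhere, then $q$ is a constant of the form $1+it$, and normalization forces $q\equiv1$. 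So in all cases $q\in\mathcal{P}$, and we write $q(\xi)=1+c_1\xi+c_2\xi^2+c_3\xi^3+\cdots$.

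The key step is to expand $q$ in terms of $A_2,A_3,A_4$ using \eqref{cc1}. Write $F(x)=x+P_2(x)+P_3(x)+P_4(x)+\cdots$ with $P_{k+1}(x)=H_{F,k}(x)x$.
\begin{itemize}
\item Since each $P_{k+1}(x)$ is a scalar multiple of $x$, we get $DP_{k+1}(x)x=(k+1)P_{k+1}(x)$ and $D^2P_{k+1}(x)(x,x)=k(k+1)P_{k+1}(x)$.
\item Expanding $(DF(x))^{-1}$ as a Neumann series, we need terms such as $DP_2(x)\bigl(P_2(x)\bigr)$ and $DP_2(x)\bigl(DP_2(x)x\bigr)$.
\item These stay proportional to $x$ along the slice, because $DP_2(x)(x)=2H_{F,1}(x)x$. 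This is where \eqref{cc1} is essential.
\item Evaluating at $x=\xi x_0$ and applying $T_{x_0}$, we obtain $T_{x_0}(P_{k+1}(\xi x_0))=A_{k+1}\xi^{k+1}$, with $A_{k+1}=H_{F,k}(x_0)$.
\end{itemize}
We then aim to show that
\[
e^{-i\beta}\bigl(\cos\beta\, q(\xi)+i\sin\beta\bigr)=1+2A_2\xi+(6A_3-4A_2^2)\xi^2+(12A_4-18A_2A_3+8A_2^3)\xi^3+\cdots .
\]
The right-hand side is exactly the expansion of $1+\xi f''(\xi)/f'(\xi)$ for $f(\xi)=\xi+A_2\xi^2+A_3\xi^3+A_4\xi^4$.

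The main obstacle is the third-order term, that is, verifying that the cross terms $DP_3(x)\bigl(DP_2(x)x\bigr)$ and $DP_2(x)\bigl(DP_3(x)x\bigr)$ reduce to scalar multiples. I would handle this by differentiating the identity $P_{k+1}(x)=H_{F,k}(x)x$. This gives
\[
DP_{k+1}(x)y=\bigl(DH_{F,k}(x)y\bigr)x+H_{F,k}(x)y,
\]
and with $y$ a multiple of $x$ everything collapses to Euler's identity for $H_{F,k}$.

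Once the expansion is established, the coefficients match those of a one-variable $\beta$-spirallike convex function. Solving the three relations gives $A_2=\tfrac12 e^{-i\beta}\cos\beta\,c_1$, and $A_3,A_4$ as explicit polynomials in $c_1,c_2,c_3$. These are identical to the formulas in the proof of Theorem~A in \cite{KR2015}. We then invoke that optimization verbatim, using the Libera--Zlotkiewicz representation of $c_2,c_3$ and maximizing over $|c_1|\in[0,2]$. This yields the stated bound. Alternatively, if $x_0$ is fixed, $f(\xi)=\xi+A_2\xi^2+A_3\xi^3+A_4\xi^4+\cdots$ can be taken as any function in $\widehat{\mathcal{C}}_\beta$ whose first four coefficients agree, so Theorem~A applies directly.
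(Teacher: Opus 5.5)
Your proposal is correct and follows the paper's proof of Theorem~\ref{T2}. It uses the same function $\phi(\xi)=T_{x_0}(\varphi(\xi x_0))/\xi$ with $e^{i\beta}\phi=\cos\beta\,p+i\sin\beta$, and the same term-by-term matching in $DF(x)\varphi(x)=D^2F(x)(x,x)+DF(x)x$, with \eqref{cc1} keeping every term a multiple of $x$. It also ends with the same hand-off to the optimization in \cite{KR2015}.

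Your relations $A_2=\tfrac12 e^{-i\beta}\cos\beta\,c_1$, $6A_3-4A_2^2=e^{-i\beta}\cos\beta\,c_2$ and $12A_4-18A_2A_3+8A_2^3=e^{-i\beta}\cos\beta\,c_3$ are the correct ones. The paper's \eqref{A2}--\eqref{A4} lose the factor $\tfrac12$ in $A_2$ (cf.\ its own proof of Corollary~\ref{Cor-3.2}), and its phase-dropping ``triangle inequality'' step is not valid. So your recognition of this expansion as that of $1+\xi f''/f'$ is the cleaner way to close the argument: it lets you apply Theorem~A directly to the one-variable $f$ defined by $1+\xi f''/f'=e^{-i\beta}(\cos\beta\,q+i\sin\beta)$.
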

{ By dropping the restrictive assumption $F(x) = g(x)x$ and utilizing the weaker condition of slice homogeneous polynomial expansions, we can extend our estimates to a much larger class of biholomorphic mappings on $\mathbb{C}^n$. In the classical case where the spiral parameter vanishes ($\beta = 0$), Theorem \ref{T2} yields the following sharp result for quasi-convex mappings of type B.  
	\begin{cor}\label{Cor-3.2}
		Let $\mathbb{B}_n$ be the open unit ball of the finite-dimensional complex Banach space $\mathbb{C}^n$. Let $F: \mathbb{B}_n \to \mathbb{C}^n$ be a normalized locally biholomorphic mapping satisfying the directional slice condition
		$$\frac{D^{k+1}F(0)(x^{k+1})}{(k+1)!} = H_{F,k}(x)x, \quad x \in \mathbb{C}^n, \; k = 1, 2, 3,$$
		where each $H_{F,k}(x)$ is a homogeneous polynomial of degree $k$ with values in $\mathbb{C}$. If $F$ is a quasi-convex mapping of type B on $\mathbb{B}_n$ (corresponding to $\beta = 0$), then for every $x_0 \in \mathbb{C}^n$ with $\Vert x_0 \Vert = 1$, we have
		$$\vert H_{2,2}(F) \vert = \vert A_2 A_4 - A_3^2 \vert \le \frac{1}{8},$$
		where the targeted scalar invariants $A_2, A_3$, and $A_4$ are defined via the directional evaluations:
		$$A_m = \frac{1}{m!} T_{x_0} \left( D^m F(0)(x_0, \dots, x_0) \right), \quad m = 2, 3, 4,$$
		with $T_{x_0} \in T(x_0)$ being an associated supporting linear functional. Moreover, this upper bound is strictly sharp, and an explicit multidimensional extremal mapping is given by:
		$$F_0(x) = x + \frac{1}{2}\big(T_{x_0}(x)\big)x - \frac{1}{4}\big(T_{x_0}(x)\big)^3x + \dots$$
\end{cor}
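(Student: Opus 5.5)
The plan is to obtain Corollary \ref{Cor-3.2} by specializing Theorem \ref{T2} to $X=\mathbb{C}^n$ and $\beta=0$, and then to prove sharpness with an explicit mapping. For $\beta=0$ the class $\widehat{\mathcal{C}}_{0}(\mathbb{B}_n)$ is exactly the class of quasi-convex mappings of type B. The hypothesis of Corollary \ref{Cor-3.2} is then precisely the hypothesis of Theorem \ref{T2}: $F$ is locally biholomorphic, normalized, and satisfies \eqref{cc1} for $k=1,2,3$.

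\textbf{Upper bound.} Theorem \ref{T2} gives, for every unit vector $x_0$ and every $T_{x_0}\in T(x_0)$,
\[
|A_2A_4-A_3^2|\le \frac{17(1+\cos^2 0)+2\cos 0}{144(1+\sec^2 0)}=\frac{36}{288}=\frac18 .
\]
This step is pure arithmetic, as in the proof of Corollary \ref{Cor-3.1}.

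\textbf{Sharpness.} I would take a one-variable convex function $f_0\in\mathcal{K}$ with $|a_2a_4-a_3^2|=1/8$. The candidate is the one from Proposition \ref{Prop-4.1} at $\beta=0$, namely $f_0(\xi)=\xi+\tfrac12\xi^2-\tfrac14\xi^4+\cdots$. Then I would lift it by
\[
F_0(x)=\frac{f_0\bigl(T_{x_0}(x)\bigr)}{T_{x_0}(x)}\,x ,
\]
which is of the form $g(x)x$ with $g(x)=f_0(T_{x_0}(x))/T_{x_0}(x)$ and $g(0)=1$.

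The lifted mapping must meet three requirements.
\begin{enumerate}
\item[(i)] $F_0$ satisfies \eqref{cc1}. Expanding gives $D^{k+1}F_0(0)(x^{k+1})/(k+1)!=a_{k+1}\,T_{x_0}(x)^k\,x$, so $H_{F_0,k}(x)=a_{k+1}T_{x_0}(x)^k$, which is a homogeneous polynomial of degree $k$.
\item[(ii)] The invariants agree with the one-variable coefficients. Since $T_{x_0}(x_0)=1$, we get $A_m=a_m$ for $m=2,3,4$, and hence $H_{2,2}(F_0)=a_2a_4-a_3^2$.
\item[(iii)] $F_0$ is a quasi-convex mapping of type B on $\mathbb{B}_n$. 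I would get this from the lifting result, Lemma \ref{L1}: if $f\in\mathcal{K}$, then $x\mapsto \frac{f(T_{x_0}(x))}{T_{x_0}(x)}x$ belongs to $\widehat{\mathcal{C}}_0(\mathbb{B})$. This step also uses Lemma \ref{L2}, which transfers the condition between $F$ and its slice function $\xi\mapsto g(\xi x_0)\xi=f_0(\xi)$.
\end{enumerate}

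\textbf{Main obstacle.} Two points need care. The first is checking that the chosen $f_0$ really attains $1/8$. From the displayed terms, $a_2=\tfrac12$, $a_3=0$ and $a_4=-\tfrac14$, so $a_2a_4-a_3^2=-\tfrac18$. I would then verify that $f_0$ is genuinely convex. This means checking that the Carath\'eodory parameters $c_1=1$, $c_2=-1$, $c_3=-2$ come from an actual $p\in\mathcal{P}$ through $1+\xi f_0''/f_0'=p$. If they do not, I would use the standard extremal $p$ (for instance a suitable rotation or combination of $(1+\xi)/(1-\xi)$) that Proposition \ref{Prop-4.1} provides.

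The second point is that the lifting in (iii) needs the quasi-convexity inequality at every $x\in\mathbb{B}_n$ and every $T_x\in T(x)$, not only along the slice through $x_0$. This is exactly what Lemma \ref{L1} is meant to supply. I would rely on it, and if necessary check that the computation of $(DF_0(x))^{-1}\bigl(D^2F_0(x)(x,x)+DF_0(x)x\bigr)$ reduces to $\bigl(1+\zeta f_0''(\zeta)/f_0'(\zeta)\bigr)x$ with $\zeta=T_{x_0}(x)$, using $|\zeta|\le\|x\|<1$.
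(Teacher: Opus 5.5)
Your proposal is correct and is essentially the paper's own proof: the bound is Theorem~\ref{T2} specialized to $\beta=0$, and sharpness comes from the one-variable convex function $f_0$ with Carath\'eodory data $(c_1,c_2,c_3)=(1,-1,-2)$, lifted to $F_0(x)=\frac{f_0(T_{x_0}(x))}{T_{x_0}(x)}x$ via Lemma~\ref{L1}. The point you hedged on is fine, because these data come from $p_0(z)=\frac{1-z^2}{1-z+z^2}=\frac12\bigl(\frac{1+e^{i\pi/3}z}{1-e^{i\pi/3}z}+\frac{1+e^{-i\pi/3}z}{1-e^{-i\pi/3}z}\bigr)\in\mathcal{P}$, which gives $f_0'(\xi)=1/(1-\xi+\xi^2)$; also, Lemma~\ref{L2} is not needed for the lifting.
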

\begin{proof}[\bf Proof of Theorem \ref{T2}] Fix $x_0\in \partial{\mathbb{B}}$ and let $T_{x_0}\in T(x_0)$. Define
\begin{align}
	\label{aaa1}
	\phi(\xi)=
	\begin{cases}
		\dfrac{T_{x_0}\!\left(\varphi(\xi x_0)\right)}{\xi}, & \xi\neq 0,\\[2mm]
		1, & \xi=0,
	\end{cases}
\end{align}
where
\[
\varphi(x)=(DF(x))^{-1}\bigl(D^2F(x)(x,x)+DF(x)x\bigl).
\]
Evidently, $\phi \in H(\mathbb{U})$ with $\phi(0) = 1$. Furthermore, since $F \in \widehat{\mathcal{C}}_{\beta}(\mathbb{B})$, the defining condition of $\beta$-spirallike quasi-convexity of type $B$ implies that
\[
{\rm Re}\!\left(e^{i\beta}\frac{T_{x_0}\!\left(\varphi(\xi x_0)\right)}{\xi}\right)
={\rm Re}\!\left(e^{i\beta}\frac{T_{\xi x_0}\!\left(\varphi(\xi x_0)\right)}{\|\xi x_0\|}\right)>0,\;\xi\in\mathbb U\setminus\{0\}.
\]
Consequently, we have
\[
\operatorname{Re}\left(e^{i\beta}\phi(\xi)\right)>0,\; \xi\in\mathbb U,
\]
which implies that $e^{i\beta}\phi$ belongs to the Carath\'eodory class $\mathcal{P}$. Thus, there exists a function $p\in \mathcal{P}$ such that 
\begin{equation}\label{kp1}
	e^{i\beta}\phi(z)=\cos \beta p(z)+i\sin \beta.
\end{equation}
Since $\phi$ is holomorphic on $\mathbb{U}$, it admits the following Taylor series expansion for $\xi \in \mathbb{U}$
\begin{align}
	\label{phi1}\phi(\xi)=1+\frac{T_{x_0}(D^2\varphi(0)(x_0^2))}{2!}\xi+\frac{T_{x_0}(D^3\varphi(0)(x_0^3))}{3!}\xi^2+\frac{T_{x_0}(D^4\varphi(0)(x_0^4))}{4!}\xi^3+\cdots.
\end{align}
Using (\ref{aaa1}), \eqref{kp1} and \eqref{phi1}, a direct computation yields
\begin{align}
	\label{qq1}
	\begin{cases} \dfrac{T_{x_0}(D^2\varphi(0)(x_0^2))}{2!}&=\cos \beta e^{-i\beta}p_1,\vspace{2mm}\\
		\dfrac{T_{x_0}(D^3\varphi(0)(x_0^3))}{3!}&=\cos \beta e^{-i\beta}p_2\vspace{2mm}\\
		\dfrac{T_{x_0}(D^4\varphi(0)(x_0^4))}{4!}&=\cos \beta e^{-i\beta}p_3.
	\end{cases}
\end{align}
On the other hand, recalling that $\varphi(x)=(DF(x))^{-1}\bigl(D^2F(x)(x,x)+DF(x)x\bigr)$, we immediately obtain the operator relation
\begin{equation}\label{pd1}
	DF(x)\varphi(x) = D^2F(x)(x,x) + DF(x)x.
\end{equation}
Since $F$ and $\varphi$ are holomorphic mappings on the unit ball $\mathbb{B}$ satisfying the standard normalizations $F(0)=0$, $\varphi(0)=0$, and $DF(0)=D\varphi(0)=I$, they admit unique expansions into series of homogeneous polynomials centered at the origin. Let 
\begin{align*}
	F_n(x) = \frac{1}{n!}D^n F(0)(x^n)\; \mbox{and}\;\varphi_n(x) = \frac{1}{n!}D^n \varphi(0)(x^n)
\end{align*} denote the $n$-th degree homogeneous polynomials associated with $F$ and $\varphi$, respectively. Then, their Fr\'echet-Taylor expansions and respective derivatives are given by
\begin{align*}
\varphi(x) &= D\varphi(0)x + \frac{1}{2!}\varphi_2(x) + \frac{1}{3!}\varphi_3(x) + \frac{1}{4!}\varphi_4(x) + \dots \\
DF(x) &= I + F_2(x, \cdot) + \frac{1}{2!}F_3(x^2, \cdot) + \frac{1}{3!}F_4(x^3, \cdot) + \dots
\end{align*}
We expand the two terms on the right-hand side of \eqref{pd1} using the multi-linear expansions of the derivatives,
\begin{enumerate}
    \item[(a).] First term $D^2F(x)(x,x)$:
    \[ D^2F(x)(x,x) = F_2(x^2) + F_3(x^3) + \frac{1}{2!}F_4(x^4) + \dots \]
    \item[(b).] Second term $DF(x)x$:
    \[ DF(x)x = x + F_2(x^2) + \frac{1}{2!}F_3(x^3) + \frac{1}{3!}F_4(x^4) + \dots \]
\end{enumerate}
Combining these two expansions, \eqref{pd1} becomes
\begin{align}\label{Eq-3.7}
	DF(x)\varphi(x) = x + 2F_2(x^2) + \frac{3}{2}F_3(x^3) + \frac{2}{3}F_4(x^4) + \dots
\end{align}
We multiply the series for $DF(x)$ and $\varphi(x)$ using Cauchy product rules,
\begin{align}\label{Eq-3.8}
	DF(x)\varphi(x) = \left( I + F_2(x, \cdot) + \frac{1}{2!}F_3(x^2, \cdot) + \dots \right) \left( D\varphi(0)x + \frac{1}{2!}\varphi_2(x) + \frac{1}{3!}\varphi_3(x) + \dots \right).
\end{align}
Thus, it follows from \eqref{Eq-3.7} and \eqref{Eq-3.8} that 
\begin{align}\label{Eq-3.9}
	x &+ 2F_2(x^2) + \frac{3}{2}F_3(x^3) + \frac{2}{3}F_4(x^4) + \dots\\&=\left( I + F_2(x, \cdot) + \frac{1}{2!}F_3(x^2, \cdot) + \dots \right) \left( D\varphi(0)x + \frac{1}{2!}\varphi_2(x) + \frac{1}{3!}\varphi_3(x) + \dots \right).\nonumber
\end{align}
We now compute the terms of RHS of \eqref{Eq-3.9} with respective degrees as
\begin{align*}
\text{Degree 1:} \quad & D\varphi(0)x \\
\text{Degree 2:} \quad & \frac{1}{2!}\varphi_2(x) + F_2(x, D\varphi(0)x) \\
\text{Degree 3:} \quad & \frac{1}{3!}\varphi_3(x) + F_2\left(x, \frac{1}{2!}\varphi_2(x)\right) + \frac{1}{2!}F_3(x^2, D\varphi(0)x) \\
\text{Degree 4:} \quad & \frac{1}{4!}\varphi_4(x) + F_2\left(x, \frac{1}{3!}\varphi_3(x)\right) + \frac{1}{2!}F_3\left(x^2, \frac{1}{2!}\varphi_2(x)\right) + \frac{1}{3!}F_4(x^3, D\varphi(0)x)
\end{align*}
Equating the degree $1$ terms from \eqref{Eq-3.9} yields
\[ D\varphi(0)x = x \implies D\varphi(0) = I. \]
Equating the degree $2$ terms from \eqref{Eq-3.9} and substituting $D\varphi(0)x = x$, we obtain
\[ \frac{1}{2!}\varphi_2(x) + F_2(x^2) = 2F_2(x^2) \]
which implies that
\begin{align*}
	\frac{D^2\varphi(0)(x^2)}{2!} = D^2F(0)(x^2).
\end{align*}
Dividing both sides by $2$ yields
\begin{align*}
	\frac{D^{2}F(0)(x^{2})}{2!} = \frac{1}{2}\frac{D^{2}\varphi(0)(x^{2})}{2!}.
\end{align*}
Equating the degree $3$ terms from \eqref{Eq-3.9} yields
\begin{align*}
	\frac{1}{3!}\varphi_3(x) + F_2\left(x, \frac{1}{2!}\varphi_2(x)\right) + \frac{1}{2!}F_3(x^3) = \frac{3}{2}F_3(x^3).
\end{align*}
Subtracting $\frac{1}{2!}F_3(x^3)$ from both sides, we obtain
\begin{align*}
	\frac{1}{3!}\varphi_3(x) + F_2\left(x, \frac{1}{2!}\varphi_2(x)\right) = F_3(x^3).
\end{align*}
Substituting the original derivative operators ($F_3(x^3) = D^3F(0)(x^3)$, $F_2 = D^2F(0)$, and $\frac{1}{2!}\varphi_2(x) = \frac{D^2\varphi(0)(x^2)}{2!}$) provides the second relation of \eqref{qqq2}:
\begin{align*}
	D^{3}F(0)(x^{3}) = \frac{D^{3}\varphi(0)(x^{3})}{3!}+D^{2}F(0)\!\left(x,\,\frac{D^{2}\varphi(0)(x^{2})}{2!}\right).
\end{align*}
Equating the degree $4$ terms yields
\begin{align*}
	 \frac{1}{4!}\varphi_4(x) + F_2\left(x, \frac{1}{3!}\varphi_3(x)\right) + \frac{1}{2!}F_3\left(x^2, \frac{1}{2!}\varphi_2(x)\right) + \frac{1}{3!}F_4(x^4) = \frac{2}{3}F_4(x^4).
\end{align*}
Isolating the $F_4(x^4)$ terms by subtracting $\frac{1}{6}F_4(x^4)$ from $\frac{2}{3}F_4(x^4)$ yields
\[ \frac{1}{2}F_4(x^4) = \frac{1}{4!}\varphi_4(x) + F_2\left(x, \frac{1}{3!}\varphi_3(x)\right) + \frac{1}{2!}F_3\left(x^2, \frac{1}{2!}\varphi_2(x)\right). \]
Thus, we obtain
\begin{align*}
	\frac{D^{4}F(0)(x^{4})}{2} = \frac{D^{4}\varphi(0)(x^{4})}{4!}+D^{2}F(0)\!\left(x,\,\frac{D^{3}\varphi(0)(x^{3})}{3!}\right)+\frac{1}{2}D^{3}F(0)\!\left(x^{2},\,\frac{D^{2}\varphi(0)(x^{2})}{2!}\right).
\end{align*}
Putting these three operational balances together gives the system
\begin{align}
	\label{qqq2}
	\begin{cases}
		\dfrac{D^{2}F(0)(x^{2})}{2!}&=\dfrac{1}{2}\dfrac{D^{2}\varphi(0)(x^{2})}{2!};\vspace{2mm}\\
		D^{3}F(0)(x^{3})&=\dfrac{D^{3}\varphi(0)(x^{3})}{3!}+D^{2}F(0)\!\left(x,\,\dfrac{D^{2}\varphi(0)(x^{2})}{2!}\right);\vspace{2mm}\\
		\dfrac{D^{4}F(0)(x^{4})}{2}&=\dfrac{D^{4}\varphi(0)(x^{4})}{4!}+D^{2}F(0)\!\left(x,\,\dfrac{D^{3}\varphi(0)(x^{3})}{3!}\right)\vspace{2mm}\\
		&\;\;\;+\dfrac{1}{2}D^{3}F(0)\!\left(x^{2},\,\dfrac{D^{2}\varphi(0)(x^{2})}{2!}\right).
	\end{cases}
\end{align}
Evaluating the operator equations \eqref{qq1} and \eqref{qqq2} simultaneously subject to the homogeneous polynomial expansion \eqref{cc1}, we deduce the following explicit formulas expressing the scalar coefficients $A_2$, $A_3$, and $A_4$ via the Carath\'eodory parameters $p_1$, $p_2$, and $p_3$.\vspace{1.2mm}

First, evaluating the first equation of system \eqref{qqq2} at $x = x_0 \in \partial\mathbb{B}$ and applying the linear supporting functional $T_{x_0}$ yields
\begin{align}\label{A2}
	A_2 = T_{x_0}\!\left(\frac{D^2F(0)(x_0^2)}{2!}\right) = H_{F,1}(x_0) = \frac{1}{2}\,T_{x_0}\!\left(\frac{D^2\varphi(0)(x_0^2)}{2!}\right) = \cos \beta e^{-i\beta}p_1.
\end{align}
In view of assumption \eqref{cc1}, the second differential acts on the directional vector $x_0$ via 
\begin{align*}
	\frac{1}{2!}D^2F(0)(x_0^2) = H_{F,1}(x_0)x_0 = A_2 x_0.
\end{align*} 
Therefore, the corresponding representation for $\varphi$ is given precisely by
\begin{align}
	\label{phi2_vec}
	\frac{D^2\varphi(0)(x_0^2)}{2!} = 2\,\frac{D^2F(0)(x_0^2)}{2!} = 2H_{F,1}(x_0)x_0 = 2A_2x_0 = 2\cos \beta e^{-i\beta}p_1x_0.
\end{align}
Evaluating the second relation in \eqref{qqq2} at $x = x_0$ and utilizing the multi-linear expansions in conjunction with the structural form \eqref{phi2_vec}, it follows that
\begin{align}\label{A3}
A_3 &= T_{x_0}\!\left(\frac{D^3F(0)(x_0^3)}{3!}\right) = H_{F,2}(x_0) \nonumber\\
&= \frac{1}{6}\left(T_{x_0}\!\left(\frac{D^3\varphi(0)(x_0^3)}{3!}\right) + T_{x_0}\!\left(D^2F(0)\!\left(x_0, \frac{D^2\varphi(0)(x_0^2)}{2!}\right)\right)\right) \nonumber\\
&= \frac{1}{6}\left(\cos\beta\,e^{-i\beta}p_2 + T_{x_0}\!\left(D^2F(0)(x_0, 2\cos \beta e^{-i\beta}p_1x_0)\right)\right) \nonumber\\
&= \frac{1}{6}\left(\cos\beta\,e^{-i\beta}p_2 + 2\cos \beta e^{-i\beta}p_1 \cdot T_{x_0}(D^2F(0)(x_0^2))\right) \nonumber\\
&= \frac{1}{6}\left(\cos\beta\,e^{-i\beta}p_2 + 4\cos \beta e^{-i\beta}p_1 A_2\right) \nonumber\\
&= \frac{1}{6}\,\cos\beta\,e^{-i\beta}\left(4\cos\beta e^{-i\beta}p_1^2 + p_2\right).
\end{align}
To establish the third-order vector relation for $\varphi$, we isolate the derivative operator from the second equation of \eqref{qqq2} at $x = x_0$
\begin{align}\label{phi3_vec}
\frac{D^3\varphi(0)(x_0^3)}{3!} &= \frac{D^3F(0)(x_0^3)}{3!} - D^2F(0)\!\left(x_0, \frac{D^2\varphi(0)(x_0^2)}{2!}\right) \nonumber\\
&= H_{F,2}(x_0)x_0 - D^2F(0)(x_0, 2\cos \beta e^{-i\beta}p_1x_0) \nonumber\\
&= \left(A_3 - 4\cos\beta e^{-i\beta}p_1 A_2\right)x_0 \nonumber\\
&= \cos\beta\,e^{-i\beta} p_2x_0.
\end{align}
Finally, evaluating the third relation in \eqref{qqq2} at $x = x_0$ and applying the linear functional $T_{x_0}$ to the resulting symmetric multi-linear forms, we obtain
\begin{align}\label{A4}
A_4 &= T_{x_0}\!\left(\frac{D^4F(0)(x_0^4)}{4!}\right) = H_{F,3}(x_0) \nonumber\\
&= \frac{1}{12}\Bigg(T_{x_0}\!\left(\frac{D^4\varphi(0)(x_0^4)}{4!}\right) + T_{x_0}\!\left(D^2F(0)\!\left(x_0, \frac{D^3\varphi(0)(x_0^3)}{3!}\right)\right) \nonumber\\
& \quad + \frac{1}{2}T_{x_0}\!\left(D^3F(0)\!\left(x_0^2, \frac{D^2\varphi(0)(x_0^2)}{2!}\right)\right)\Bigg) \nonumber\\
&= \frac{1}{12}\Bigg(\cos\beta e^{-i\beta}p_3 + T_{x_0}\!\left(D^2F(0)(x_0, \cos\beta e^{-i\beta}p_2x_0)\right) \nonumber\\
& \quad + \frac{1}{2}T_{x_0}\!\left(D^3F(0)(x_0^2, 2\cos\beta e^{-i\beta}p_1x_0)\right)\Bigg) \nonumber\\
&= \frac{1}{12}\Bigg(\cos\beta\,e^{-i\beta}p_3 + \cos\beta\,e^{-i\beta}p_2 \cdot T_{x_0}(D^2F(0)(x_0^2)) + \cos\beta e^{-i\beta}p_1 \cdot T_{x_0}(D^3F(0)(x_0^3))\Big) \nonumber\\
&= \frac{1}{12}\Bigg(\cos\beta\,e^{-i\beta}p_3 + 2\cos\beta\,e^{-i\beta}p_2A_2 + 6\cos\beta e^{-i\beta}p_1A_3\Bigg) \nonumber\\
&= \frac{1}{12}\cos\beta e^{-i\beta}\left(p_3 + 6\cos\beta e^{-i\beta}p_1p_2 + 4\cos^2\beta e^{-2i\beta}p_1^3\right).
\end{align}
By substituting the explicit formulas \eqref{A2}--\eqref{A4} into the definition of $H_{2,2}(F)$ and expanding the resulting polynomial expression, we deduce that
\begin{align*}
A_2A_4 - A_3^2 &= \left[\cos\beta e^{-i\beta}p_1\right] \left[\frac{\cos\beta e^{-i\beta}}{12}\left(p_3 + 6\cos\beta e^{-i\beta}p_1p_2 + 4\cos^2\beta e^{-2i\beta}p_1^3\right)\right] \\
& \quad - \left[\frac{\cos\beta e^{-i\beta}}{6}\left(4\cos\beta e^{-i\beta}p_1^2 + p_2\right)\right]^2 \\
&= \frac{\cos^2\beta e^{-2i\beta}}{144}\Big(12p_1p_3 + 72\cos\beta e^{-i\beta}p_1^2p_2 + 48\cos^2\beta e^{-2i\beta}p_1^4\Big) \\
& \quad - \frac{\cos^2\beta e^{-2i\beta}}{144}\Big(64\cos^2\beta e^{-2i\beta}p_1^4 + 32\cos\beta e^{-i\beta}p_1^2p_2 + 4p_2^2\Big) \\
&= \frac{\cos^2\beta e^{-2i\beta}}{144}\Big(12p_1p_3 - 4p_2^2 + 40\cos\beta e^{-i\beta}p_1^2p_2 - 16\cos^2\beta e^{-2i\beta}p_1^4\Big).
\end{align*}
Employing the triangle inequality 
\begin{align*}
	|xa+yb|\le |x|\,|a|+|y|\,|b|
\end{align*} together with the identity $|e^{-in\beta}|=1$ for every $n\in\mathbb{R}$, we obtain the desired upper bound
\begin{align*}
	|H_{2,2}(f)| = \left|A_2A_4-A_3^2\right| \le \frac{\cos^2\beta}{144}\left|12p_1p_3 - 4p_2^2 + 40\cos\beta p_1^2p_2 - 16\cos^2\beta p_1^4\right|.
\end{align*}
The remainder of the proof follows by applying the identical coefficient optimization techniques as established in the proof of Theorem A (\cite[Theorem 3.3]{KR2015}); hence, the details are omitted for the sake of brevity.
\end{proof}
\begin{proof}[\bf Proof of Corollary \ref{Cor-3.2}]
	Suppose $F \in \mathcal{C}_0^B(\mathbb{B}_n)$ satisfies the directional slice condition:
	$$\frac{D^{k+1}F(0)(x^{k+1})}{(k+1)!} = H_{F,k}(x)x,\;x \in \mathbb{C}^n, \; k = 1, 2, 3.$$
	By setting the spirallike parameter $\beta = 0$ in the structural relation \eqref{kp1}, the corresponding Carath\'{e}odory function $p \in \mathcal{P}$ satisfies $\phi(z) = p(z)$. Under this parameter choice, the explicit formulas \eqref{A2}--\eqref{A4} expressing the targeted scalar invariants $A_2, A_3$, and $A_4$ via the Carath\'{e}odory parameters $p_1, p_2, p_3$ simplify directly to
	$$A_2 = \frac{1}{2}p_1,\; A_3 = \frac{1}{6}(p_1^2 + p_2),\; \text{and}\;A_4 = \frac{1}{12}\left(p_3 + 6p_1p_2 + 4p_1^3\right).$$
	Substituting these parameter representations into the second Hankel determinant and applying Theorem \ref{T2} under the condition $\beta = 0$ yields that
	$$|H_{2,2}(F)| = |A_2A_4 - A_3^2| \le \frac{17(1+\cos^2 0) + 2\cos 0}{144(1+\sec^2 0)} = \frac{36}{288} = \frac{1}{8}.$$
	
	To verify that this upper bound is strictly sharp, we choose the boundary parameter configurations $c_1 = 1$, $c_2 = -1$, and $c_3 = -2$ within the standard Carath\'{e}odory-Toeplitz framework. These optimization parameters yield the primary coefficient values $a_2 = 1/2$, $a_3 = 0$, and $a_4 = -1/4$ for the single-variable profile \begin{align*}
		f_0(\xi) = \xi + \frac{1}{2}\xi^2 - \frac{1}{4}\xi^4 + \dots
	\end{align*} ensuring that $|a_2a_4 - a_3^2| = 1/8$.\vspace{1.2mm}
	
	Applying the dimensional lifting setting from Lemma \ref{L1} along the directional baseline of the supporting linear functional $T_{x_0}$ confirms that the multidimensional mapping 
	\begin{align*}
		F_0(x) = x + \frac{1}{2}(T_{x_0}(x))x - \frac{1}{4}(T_{x_0}(x))^3x + \dots
	\end{align*} serves as the explicit extremal mapping on $\mathbb{B}_n$. This completes the proof.
	\end{proof}}
\section{{\bf Key lemmas}}\label{Sec-2}
In this section, we establish several auxiliary results that serve as the analytical foundation for our main theorems. These lemmas focus on the structural characterization and dimensional lifting of $\beta$-spirallike quasi-convex mappings. By bridging classical one-dimensional geometric properties with a multi-dimensional operator-theoretic framework, they provide the necessary machinery to reduce complex Fréchet derivative relations on the unit ball to tractable scalar Carathéodory representations. Beyond their immediate utility here, these results are of independent interest in the study of biholomorphic mappings in Banach spaces.
\begin{lem}\label{L1}
	Let \(u\in X\) with \(\|u\|=1\) and let \(T_u\in T(u)\). Define $F(x)=\frac{f(T_u(x))}{T_u(x)}\,x,$ $x\in\mathbb{B},$
	where \(f\in\mathcal{S}\). If \(f\) is a normalized \(\beta\)-spirallike convex function on \(\mathbb{U}\), then \(F\) is a \(\beta\)-spirallike quasi-convex mapping of type \(B\) on \(\mathbb{B}\).
\end{lem}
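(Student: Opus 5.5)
The plan is a direct computation: the mapping in Lemma~\ref{L1} is a scalar multiple of the identity, and it turns out to send each radial vector $x$ to a scalar multiple of $x$. This reduces the defining expression of $\widehat{\mathcal{C}}_{\beta}(\mathbb{B})$ to the one-variable quantity $1+\xi f''(\xi)/f'(\xi)$ evaluated at $\xi=T_u(x)$.

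\textbf{Setup.} Write $t=T_u(x)$ and $h(\xi)=f(\xi)/\xi$, with $h(0)=1$. Then $h$ is holomorphic on $\mathbb{U}$ (removable singularity) and nonvanishing, since $f\in\mathcal{S}$. Since $|t|\le\|x\|<1$, the map $F(x)=h(T_u(x))\,x$ is holomorphic on $\mathbb{B}$. It satisfies $F(0)=0$, and $DF(0)=I$ because $h(0)=1$.

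\textbf{Derivatives.} Differentiating by the product rule gives
\[
DF(x)y=h(t)\,y+h'(t)\,T_u(y)\,x,
\]
\[
D^2F(x)(y,z)=h'(t)\bigl(T_u(z)\,y+T_u(y)\,z\bigr)+h''(t)\,T_u(y)\,T_u(z)\,x .
\]
Using $f=\xi h$, $f'=h+\xi h'$ and $f''=2h'+\xi h''$, these specialize to
\[
DF(x)x=f'(t)\,x,\qquad D^2F(x)(x,x)=\bigl(2th'(t)+t^2h''(t)\bigr)x=t f''(t)\,x .
\]

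\textbf{Local biholomorphy.} The operator $DF(x)=h(t)\bigl(I+\tfrac{h'(t)}{h(t)}\,x\otimes T_u\bigr)$ is a rank-one perturbation of a nonzero multiple of the identity. It is invertible with bounded inverse exactly when $1+th'(t)/h(t)\neq0$, i.e.\ when $f'(t)/h(t)\neq 0$. This holds because $f$ is univalent, so $f'$ never vanishes, and $h\neq0$. The Sherman--Morrison formula gives the inverse explicitly, but we only need it on $x$: from $DF(x)x=f'(t)x$ we get $(DF(x))^{-1}x=x/f'(t)$.

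\textbf{Reduction to one variable.} Combining the above,
\[
(DF(x))^{-1}\bigl(D^2F(x)(x,x)+DF(x)x\bigr)=\Bigl(1+\frac{t f''(t)}{f'(t)}\Bigr)x .
\]
For $x\in\mathbb{B}\setminus\{0\}$ and any $T_x\in T(x)$, we have $T_x(x)=\|x\|$. Hence
\[
\operatorname{Re}\Bigl\{e^{i\beta}T_x\bigl((DF(x))^{-1}(D^2F(x)(x,x)+DF(x)x)\bigr)\Bigr\}=\|x\|\,\operatorname{Re}\Bigl\{e^{i\beta}\Bigl(1+\frac{tf''(t)}{f'(t)}\Bigr)\Bigr\}.
\]
The right-hand side is $\ge0$ because $t=T_u(x)\in\mathbb{U}$ and $f$ is $\beta$-spirallike convex. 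At $t=0$ the bracket equals $1$, so the case $T_u(x)=0$ is covered automatically. Hence $F\in\widehat{\mathcal{C}}_{\beta}(\mathbb{B})$.

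\textbf{Main obstacle.} The only point requiring care is the invertibility of $DF(x)$ with bounded inverse on a general Banach space. I would handle it with the explicit rank-one inverse
\[
(I+a\,x\otimes T_u)^{-1}=I-\frac{a}{1+a\,t}\,x\otimes T_u ,\qquad a=\frac{h'(t)}{h(t)},
\]
which is bounded whenever $1+a\,t\neq0$. Everything else is routine differentiation.
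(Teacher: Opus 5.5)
Your proof is correct and follows essentially the same route as the paper: write $F(x)=h(x)x$, show that $D^2F(x)(x,x)+DF(x)x=(tf''(t)+f'(t))x$, invert $DF(x)$ via the rank-one (Sherman--Morrison) formula, and apply $T_x$ to reduce to the one-variable condition $\operatorname{Re}\{e^{i\beta}(1+tf''(t)/f'(t))\}>0$ at $t=T_u(x)\in\mathbb{U}$. Your version is somewhat cleaner than the paper's in three respects: you only need $(DF(x))^{-1}x=x/f'(t)$; you explicitly check normalization and the bounded invertibility of $DF(x)$ (via $h(t)\neq0$ and $f'(t)\neq0$), which the paper leaves implicit; and you invoke the correct convex-type condition, whereas the paper's first display misstates it as $1+\xi f'(\xi)/f(\xi)$.
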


\begin{proof}[\bf Proof of Lemma \ref{L1}]
	Since $f$ is a normalized $\beta$-spirallike function on $\mathbb{U}$, we have
	\begin{align}
		\operatorname{Re}\left(e^{i\beta}\left(1+\frac{\xi f'(\xi)}{f(\xi)}\right)\right)>0,\; \xi\in\mathbb{U}.
		\label{eq:spiral}
	\end{align}
	For convenience, let $h(x)=\frac{f(T_u(x))}{T_u(x)},\; x\in\mathbb{B}.$ It follows immediately from the definition of $F$ that $F(x)=h(x)x.$\vspace{1.2mm}
	
	The first and second order Fr\'echet derivatives of $F$ are given by
	\begin{align*}
		DF(x)(x)=h(x)x+Dh(x)(x)\,x\;\text{and}\; D^2F(x)(x,x)=D^2h(x)(x,x)x+2Dh(x)(x)\; x\in X.
	\end{align*}
	From above two equations, we obtain
	\bea\label{le.1}
	D^2F(x)(x,x)+DF(x)x
	=
	\left(h(x)+3Dh(x)x+D^2h(x)(x,x)\right)x.
	\eea
	Moreover, by the chain rule for Fr\'echet derivatives, we obtain
	\bea\label{mmm1}
	&& h(x)+Dh(x)x=f'(T_u(x))\\
	\label{mmm2}\text{and}\;&& h(x)+3Dh(x)x+D^2h(x)(x,x)=T_u(x)f''(T_u(x))+f'(T_u(x)).
	\eea
	A straightforward calculation yields that
	\begin{align}
		\label{le.2}
		(DF(x))^{-1}\eta
		=\frac1{h(x)}\left(\eta-\frac{(Dh(x)\eta) x}{h(x)+Dh(x)x}\right).
	\end{align}
	Substituting $\eta=D^2F(x)(x,x)+DF(x)x$ into \eqref{le.2} and combining the resulting expression with \eqref{le.1}, we obtain
	
	\begin{align}
		\label{le.3}
		(DF(x))^{-1}&(D^2F(x)(x,x)+DF(x)x)\nonumber\\
		&=
		\frac{1}{h(x)}\left((D^2F(x)(x,x)+DF(x)x)-\frac{Dh(x)(D^2F(x)(x,x)+DF(x)x)x}
		{h(x)+Dh(x)x}\right)\nonumber\\
		&=\frac{1}{h(x)}\bigg(D^2h(x)(x,x)x+3Dh(x)(x)+h(x)x
		\nonumber\\
		&\quad\quad-\frac{Dh(x)(D^2h(x)(x,x)x+3Dh(x)(x)+h(x)x)}
		{h(x)+Dh(x)x}x\bigg).
	\end{align}
	Since \(h(x)+3Dh(x)x+D^2h(x)(x,x)\) is a scalar say $H(x)$, it follows that
	\[
	Dh(x)H(x)x
	=H(x)Dh(x)x.
	\]
	Hence from (\ref{le.3}), we obtain
	\bea\label{kkk1}
	&&(DF(x))^{-1}\left(D^2F(x)(x,x)+DF(x)x\right)\nonumber\\
	&=&
	\frac{h(x)+3Dh(x)x+D^2h(x)(x,x)}{h(x)}
	\left(
	1-\frac{Dh(x)x}{h(x)+Dh(x)x}
	\right)x\nonumber\\
	&=&
	\frac{h(x)+3Dh(x)x+D^2h(x)(x,x)}
	{h(x)+Dh(x)x}\,x.
	\eea
	Applying the identities \eqref{mmm1} and \eqref{mmm2} to \eqref{kkk1}, we obtain
	\[
	(DF(x))^{-1}\left(D^2F(x)(x,x)+DF(x)x\right)
	=\left(1+\frac{T_u(x)f''(T_u(x))}{f'(T_u(x))}\right)x.
	\]
	Therefore, we obtain
	\bea\label{f1}
	T_x\!\left((DF(x))^{-1}\left(D^2F(x)(x,x)+DF(x)x\right)\right)=\left(1+\frac{T_u(x)f''(T_u(x))}{f'(T_u(x))}\right)\|x\|.
	\eea
	
	Combining \eqref{f1} with \eqref{eq:spiral}, we finally deduce that
	\beas
	&&\operatorname{Re}\left(e^{i\beta}T_x\!\left((DF(x))^{-1}\left(D^2F(x)(x,x)+DF(x)x\right)\right)\right)\\
	&=&\operatorname{Re}\;\left(e^{i\beta}\left(1+\frac{T_u(x)f''(T_u(x))}{f'(T_u(x))}\right)\right)\|x\|>0,
	\eeas
	for $x\in \mathbb{B}\setminus\{0\}$. 
	
	Hence, by the definition of \(\beta\)-spirallike quasi-convex mapping of type \(B\) on \(\mathbb{B}\), $F$ is a \(\beta\)-spirallike quasi-convex mapping of type \(B\) on \(\mathbb{B}\).
\end{proof}
To facilitate the transition between one and several complex variables, we prove the following characterization lemma for spirallike mappings on the unit ball. 
\begin{lem}\label{L2}
	Suppose that $g\in H(\mathbb{B},\mathbb{C})$ with $g(0)=1$, and define $F(x)=g(x)x, x\in\mathbb{B}.$ Fix $x_{0}\in\partial\mathbb{B}$ and let
	$f(\xi)=g(\xi x_{0})\,\xi,\xi\in\mathbb{U}.$
	Then, for $|\beta|<\pi/2$,
	\[
	f\in\widehat{\mathcal{C}}_{\beta}
	\quad\Longleftrightarrow\quad
	F\in\widehat{\mathcal{C}}_{\beta}(\mathbb{B}).
	\]
\end{lem}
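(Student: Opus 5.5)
The plan is to reduce both sides of the equivalence to one and the same scalar quantity on each complex line through the origin. The computation is the one already carried out in the proof of Lemma~\ref{L1}, now with $h=g$. Throughout, write $G(\xi)=g(\xi x_0)$, so that $f(\xi)=\xi G(\xi)$ on $\mathbb{U}$.

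\textbf{Step 1 (the multidimensional expression).} Since $F(x)=g(x)x$, we have $DF(x)=g(x)I+x\,Dg(x)(\cdot)$, which is a rank-one perturbation of a multiple of the identity. Hence $DF(x)$ is invertible with bounded inverse exactly when $g(x)\neq 0$ and $g(x)+Dg(x)x\neq 0$, and its inverse is given by formula \eqref{le.2} with $h=g$. Repeating the steps \eqref{le.1}--\eqref{kkk1} verbatim yields
\[
(DF(x))^{-1}\bigl(D^2F(x)(x,x)+DF(x)x\bigr)=\frac{g(x)+3Dg(x)x+D^2g(x)(x,x)}{g(x)+Dg(x)x}\,x .
\]
Because the right-hand side is a scalar multiple of $x$, applying any $T_x\in T(x)$ gives that scalar multiplied by $\|x\|$. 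In particular, the resulting value does not depend on which $T_x\in T(x)$ is chosen.

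\textbf{Step 2 (restriction to the slice $x=\xi x_0$).} For $x=\xi x_0$ with $0<|\xi|<1$, homogeneity of the derivatives gives
\[
Dg(\xi x_0)(\xi x_0)=\xi G'(\xi), \qquad D^2g(\xi x_0)(\xi x_0,\xi x_0)=\xi^2G''(\xi).
\]
On the other hand, differentiating $f=\xi G$ gives $f'=G+\xi G'$ and $\xi f''=2\xi G'+\xi^2G''$. Therefore the scalar from Step 1 equals
\[
\frac{G+3\xi G'+\xi^2G''}{G+\xi G'}=1+\frac{\xi f''(\xi)}{f'(\xi)} .
\]
The two non-degeneracy conditions also match on this slice. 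First, $g(\xi x_0)\neq0$ is equivalent to $f(\xi)\neq 0$ for $\xi\neq0$. Second, $g+Dg\,x\neq0$ at $x=\xi x_0$ is equivalent to $f'(\xi)\neq0$.

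\textbf{Step 3 (both implications).} For ($\Leftarrow$), take an arbitrary $x\in\mathbb{B}\setminus\{0\}$ and write $x=\xi x_0$ with $x_0=x/\|x\|$ and $\xi=\|x\|$. Steps 1--2 then give
\[
\operatorname{Re}\bigl(e^{i\beta}T_x(\cdots)\bigr)=\|x\|\operatorname{Re}\Bigl(e^{i\beta}\bigl(1+\xi f''(\xi)/f'(\xi)\bigr)\Bigr)\ge0 .
\]
For ($\Rightarrow$), restrict the defining inequality of $\widehat{\mathcal{C}}_{\beta}(\mathbb{B})$ to the points $x=\xi x_0$. This gives $\operatorname{Re}\bigl(e^{i\beta}(1+\xi f''/f')\bigr)\ge0$ on $\mathbb{U}\setminus\{0\}$, and also at $\xi=0$, where the value is $e^{i\beta}$ and $\operatorname{Re}e^{i\beta}=\cos\beta>0$. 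Local univalence of $f$ follows from Step 2.

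\textbf{Main obstacle.} The direction ($\Leftarrow$) needs the one-variable condition on every complex line, that is, for every unit vector. Consequently I would prove the lemma with $f\in\widehat{\mathcal{C}}_{\beta}$ required for all $x_0\in\partial\mathbb{B}$ on that side; the ($\Rightarrow$) direction, which is what the proof of Theorem~\ref{T1} uses, is valid for each fixed $x_0$. A secondary technical point is the possible vanishing of $g$. Local biholomorphy of $F$, or univalence of $f$ in the one-variable class, gives $g+Dg\,x\neq0$ (equivalently $f'\neq0$ on each slice), and the final expression only requires this denominator to be nonzero. Any isolated points where $g(x)=0$ must therefore be excluded by the standing hypothesis that $F$ is locally biholomorphic, or handled by a continuity argument applied to the scalar formula.
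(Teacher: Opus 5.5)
Your Steps 1--2 and the direction ($\Rightarrow$) are the paper's argument: the rank-one inverse of $DF(x)$, the identity $(DF(x))^{-1}\bigl(D^2F(x)(x,x)+DF(x)x\bigr)=\frac{g(x)+3Dg(x)x+D^2g(x)(x,x)}{g(x)+Dg(x)x}\,x$, and then the substitution $x=\xi x_0$. They are correct, and ($\Rightarrow$) for one fixed $x_0$ is all that Theorem~\ref{T1} uses.

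Your ``main obstacle'' is a real defect of the lemma as stated, not of your plan. The paper's converse passes from the single slice through $x_0$ to $g(x)+Dg(x)x\neq0$ and to the real-part inequality at every $x\in\mathbb{B}$, and this cannot be justified. For instance, on the Euclidean ball of $\mathbb{C}^2$ take $x_0=(1,0)$ and $g(z)=1+2z_2$. Then $f(\xi)=\xi\in\widehat{\mathcal{C}}_{\beta}$. But $g(0,-\tfrac12)=0$ and $DF(0,-\tfrac12)=\begin{pmatrix}0&0\\0&-1\end{pmatrix}$ is singular, so $F\notin\widehat{\mathcal{C}}_{\beta}(\mathbb{B})$. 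Requiring the slice condition for all $x_0\in\partial\mathbb{B}$ is therefore necessary.

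There is, however, a remaining gap in your ($\Leftarrow$). Membership in $\widehat{\mathcal{C}}_{\beta}(\mathbb{B})$ includes local biholomorphy of $F$. When $\dim X\ge2$ this forces $g\neq0$ on $\mathbb{B}$, since at a zero of $g$ the operator $DF(x)=x\,Dg(x)(\cdot)$ has rank at most one. Neither remedy you offer works in this direction:
\begin{itemize}
\item local biholomorphy of $F$ is the conclusion here, not a standing hypothesis;
\item a continuity argument extends the scalar quotient but cannot make $DF(x)$ invertible.
\end{itemize}
By your Step 2, what you actually need is $f_{x_0}(\xi)\neq0$ for $0<|\xi|<1$ and every $x_0$. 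This is automatic for $\beta=0$, since convex functions are univalent. It also holds if univalence is built into $\widehat{\mathcal{C}}_{\beta}$ (compare the hypothesis $f\in\mathcal{S}$ in Lemma~\ref{L1}).

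It does not follow from the analytic condition alone when $\beta\neq0$. For $\beta=\pi/4$, the function $f(\xi)=i\bigl((1-\xi)^{i}-1\bigr)$ satisfies $e^{i\pi/4}\bigl(1+\xi f''(\xi)/f'(\xi)\bigr)=\frac{\sqrt2}{2}\bigl(\frac{1+\xi}{1-\xi}+i\bigr)$, which has positive real part, yet $f(1-e^{-2\pi})=0$. Now set $g(x)=f(x_1)/x_1$ on the Euclidean ball of $\mathbb{C}^2$. Every slice function is $\xi\mapsto f(a\xi)/a$ (or $\xi$ when $a=0$), with $a$ the first coordinate of $x_0$, so every slice lies in the class. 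Nevertheless $g(1-e^{-2\pi},0)=0$, so $DF$ is singular at $(1-e^{-2\pi},0)$.

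The paper's converse has the same omission, since its formula \eqref{eq:inverse} divides by $g(x)$. You should add $g\neq0$ on $\mathbb{B}$ to the hypotheses of ($\Leftarrow$); equivalently, require $f_{x_0}(\xi)\neq0$ for $\xi\neq0$ on every slice.
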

\begin{proof}[\bf Proof of Lemma \ref{L2}]
	Assume first that $F\in\widehat{\mathcal{C}}_{\beta}(\mathbb{B}).$ Then $F$ is locally biholomorphic on $\mathbb{B}$. Consequently, we have
	\[
	g(x)\neq0\; \mbox{and}\;g(x)+Dg(x)x\neq0,\; x\in\mathbb{B}.
	\]
	Since
	\[
	DF(x)(x)=g(x)x+Dg(x)(x)\,x,
	\]
	a simple computation gives
	\begin{align}
		[DF(x)]^{-1}=\frac1{g(x)}\left(I-\frac{xDg(x)}{g(x)+Dg(x)x}\right)
		\label{eq:inverse}.
	\end{align}
	Since $F(x)=g(x)x,$ we have
	\begin{align*}
		\begin{cases}
			DF(x)h=g(x)h+Dg(x)(h)x,\vspace{2mm}\\
			D^2F(x)(h,k)=D^2g(x)(h,k)x+Dg(x)(h)k+Dg(x)(k)h.
		\end{cases}
	\end{align*}
	Thus, it follows that
	\[
	D^2F(x)(x,x)=D^2g(x)(x,x)x+2Dg(x)(x)x,
	\]
	and hence, we obtain
	\[D^2F(x)(x,x)+DF(x)x=\bigl(g(x)+3Dg(x)x+D^2g(x)(x,x)\bigr)x.
	\]
	
	Substituting this identity into \eqref{eq:inverse}, we obtain
	\[
	\begin{aligned}
		&(DF(x))^{-1}\bigl(D^2F(x)(x,x)+DF(x)x\bigr) \\
		&=\frac{g(x)+3Dg(x)x+D^2g(x)(x,x)}{g(x)}\left(I-\frac{xDg(x)}{g(x)+Dg(x)x}
		\right)x.
	\end{aligned}
	\]
	Since
	\[\left(I-\frac{xDg(x)}{g(x)+Dg(x)x}\right)x=x-\frac{Dg(x)x}{g(x)+Dg(x)x}x
	=\frac{g(x)}{g(x)+Dg(x)x}x,\]
	it follows that
	\[
	\begin{aligned}
		(DF(x))^{-1}\bigl(D^2F(x)(x,x)+DF(x)x\bigr)
		&=\frac{g(x)+3Dg(x)x+D^2g(x)(x,x)}{g(x)+Dg(x)x}\,x \\
		&=\left(1+\frac{2Dg(x)x+D^2g(x)(x,x)}{g(x)+Dg(x)x}\right)x.
	\end{aligned}
	\]
	Substituting $x=\xi x_0$ into above equation, we obtain
	\[
	\begin{aligned}
		&(DF(\xi x_0))^{-1}\bigl(D^2F(\xi x_0)(\xi x_0,\xi x_0)+DF(\xi x_0)\xi x_0\bigr)\\
		&=\left(1+\frac{2Dg(\xi x_0)\;\xi x_0+D^2g(\xi x_0)(\xi x_0,\xi x_0)}
		{g(\xi x_0)+Dg(\xi x_0)\;\xi x_0}
		\right)\xi x_0.
	\end{aligned}
	\]
	Applying $T_{\xi x_0}$ and multiplying by $e^{i\beta}$, we obtain
	\[
	\begin{aligned}&e^{i\beta}T_{\xi x_0}\left((DF(\xi x_0))^{-1}\bigl(D^2F(\xi x_0)(\xi x_0,\xi x_0)+DF(\xi x_0)\xi x_0\bigr)\right)\\
		&=e^{i\beta}\left(1+\frac{2Dg(\xi x_0)\;\xi x_0+D^2g(\xi x_0)(\xi x_0,\xi x_0)}{g(\xi x_0)+Dg(\xi x_0)\;\xi x_0}\right)T_{\xi x_0}(\xi x_0).
	\end{aligned}
	\]
	Since $T_{\xi x_0}(\xi x_0)=|\xi|,$ from $f(\xi)=g(\xi x_{0})\xi$, we obtain
	\[
	f'(\xi)=g(\xi x_0)+Dg(\xi x_0),\xi x_0\;\text{and}\;f''(\xi)=2Dg(\xi x_0)(x_0)+\xi D^2g(\xi x_0)(x_0,x_0).
	\]
	Consequently, it follows that
	\[
	2Dg(\xi x_0)\;\xi x_0+D^2g(\xi x_0)(\xi x_0,\xi x_0)=\xi f''(\xi).
	\]
	Hence,
	\[
	e^{i\beta}T_{\xi x_0}\left((DF(\xi x_0))^{-1}\bigl(D^2F(\xi x_0)(\xi x_0,\xi x_0)+DF(\xi x_0)\xi x_0\bigr)
	\right)
	=e^{i\beta}|\xi|\left(1+\frac{\xi f''(\xi)}{f'(\xi)}\right).
	\]
	Since $F\in\widehat{\mathcal{C}}_{\beta}(\mathbb{B}),$
	it follows that
	\begin{align}
		\label{kkkk1}
		\operatorname{Re}\;\left\{e^{i\beta}|\xi|\left(1+\frac{\xi f''(\xi)}{f'(\xi)}\right)\right\}>0\Leftrightarrow 
		\operatorname{Re}\;\left\{e^{i\beta}\left(1+\frac{\xi f''(\xi)}{f'(\xi)}\right)\right\}>0.
	\end{align}
	This shows that $f\in\widehat{\mathcal{C}}_{\beta}.$\\
	
	\medskip
	
	Conversely, suppose that $f\in\widehat{\mathcal{C}}_{\beta}.$
	Then, we have 
	\begin{align*}
		\operatorname{Re}\;\left\{e^{i\beta}\left(1+\frac{\xi f''(\xi)}{f'(\xi)}\right)\right\}>0
	\end{align*}
	which implies that
	\[
	1+\frac{\xi f''(\xi)}{f'(\xi)}\neq0\; \mbox{for}\; \xi\in\mathbb{U}.
	\]
	Consequently, we have
	\[
	g(x)+Dg(x)x\neq0,\; x\in\mathbb{B}.
	\]
	Relation \eqref{eq:inverse} implies that $DF(x)$ is invertible for every $x\in\mathbb{B}$, which establishes that $F$ is locally biholomorphic.\vspace{1.2mm}
	
	Finally, using \eqref{kkkk1} together with
	\begin{align*}
		\operatorname{Re} \left(e^{-i\beta}\left(1+\frac{\xi f''(\xi)}{f'(\xi)}\right)\right)>0,
	\end{align*}
	we obtain
	\[
	\operatorname{Re}\left(e^{-i\beta}T_x\left((DF(x))^{-1}\bigl(D^2F(x)(x,x)+DF(x)x\bigr)\right)\right)>0,\; x\in\mathbb{B}\setminus\{0\}.
	\]
	Hence, $F\in\widehat{\mathcal{C}}_{\beta}(\mathbb{B}).$ This completes the proof.
\end{proof}

\section{\bf Sharpness analysis and open question}\label{Sec-4}
In this section, we study the sharpness of the upper bounds established in Theorem \ref{T1} and Theorem \ref{T2}. We prove that when the spiral parameter $\beta = 0$, the bound reduces exactly to the well-known value of ${1}/{8}$. 
\begin{prop}\label{Prop-4.1}
The upper bounds established in Theorem \ref{T1} and Theorem \ref{T2} are sharp for the classical case $\beta = 0$, giving a maximum value of ${1}/{8}$.
\end{prop}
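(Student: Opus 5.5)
We plan to treat the two assertions of the proposition separately: first that the bound of Theorem~\ref{T1} and Theorem~\ref{T2} equals $1/8$ at $\beta=0$, and then that $1/8$ is attained by a mapping in $\widehat{\mathcal{C}}_{0}(\mathbb{B})$ that falls within the scope of both theorems.

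\emph{Step 1 (value of the bound).} Put $\beta=0$ in $\frac{17(1+\cos^2\beta)+2\cos\beta}{144(1+\sec^2\beta)}$. This gives $\frac{34+2}{288}=\frac18$, so it remains only to prove attainment.

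\emph{Step 2 (a one-variable extremal).} For $\beta=0$ the relation $1+\xi f''(\xi)/f'(\xi)=p(\xi)$ with $p\in\mathcal{P}$ gives, by comparing coefficients,
\begin{align*}
a_2=\tfrac{p_1}{2},\qquad a_3=\tfrac{p_2+p_1^2}{6},\qquad a_4=\tfrac{2p_3+3p_1p_2+p_1^3}{24}.
\end{align*}
We choose $p_1=1$, $p_2=-1$, $p_3=-2$. To see that these are Carath\'eodory coefficients, we use the Libera--Z{\l}otkiewicz parametrization $2p_2=p_1^2+(4-p_1^2)x$. With $p_1=1$ this forces $x=-1$, which lies on the unit circle. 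On the boundary $|x|=1$ the third coefficient is then determined, and the formula gives
\begin{align*}
p_3=\tfrac14\bigl(p_1^3+2(4-p_1^2)p_1x-(4-p_1^2)p_1x^2\bigr)=\tfrac14(1-6-3)=-2.
\end{align*}
Thus some $p\in\mathcal{P}$ has these coefficients; since $|x|=1$, it is a convex combination of two M\"obius kernels $\frac{1+\epsilon z}{1-\epsilon z}$ with $|\epsilon|=1$, and we will write it out explicitly. Solving the differential equation $1+\xi f''/f'=p$ yields a convex function $f_0\in\mathcal{K}=\widehat{\mathcal{C}}_0$ with $a_2=\tfrac12$, $a_3=0$, $a_4=-\tfrac14$. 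Hence $a_2a_4-a_3^2=-\tfrac18$, and $|H_{2,2}(f_0)|=\tfrac18$.

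\emph{Step 3 (lifting).} Fix $x_0\in\partial\mathbb{B}$ and $T_{x_0}\in T(x_0)$, and define
\begin{align*}
F_0(x)=\frac{f_0(T_{x_0}(x))}{T_{x_0}(x)}\,x .
\end{align*}
By Lemma~\ref{L1} with $u=x_0$, $F_0\in\widehat{\mathcal{C}}_{0}(\mathbb{B})$. Moreover $F_0=g(x)x$ with $g(x)=f_0(T_{x_0}(x))/T_{x_0}(x)$ and $g(0)=1$, so $F_0$ is admissible in Theorem~\ref{T1}. Its homogeneous parts are
\begin{align*}
\frac{D^{k+1}F_0(0)(x^{k+1})}{(k+1)!}=a_{k+1}\,(T_{x_0}(x))^k\,x,
\end{align*}
so \eqref{cc1} holds with $H_{F_0,k}(x)=a_{k+1}(T_{x_0}(x))^k$, and $F_0$ is also admissible in Theorem~\ref{T2}. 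Since $T_{x_0}(x_0)=1$, applying $T_{x_0}$ gives $A_n=a_n$ for $n=2,3,4$. Therefore $|H_{2,2}(F_0)|=\tfrac18$, which shows the bound is attained in both theorems.

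The main obstacle is Step~2, namely certifying that $(1,-1,-2)$ really is a coefficient triple of a function in $\mathcal{P}$, as opposed to merely solving the algebraic extremal problem. We will handle it through the boundary case $|x|=1$ of the Libera--Z{\l}otkiewicz formulas, and cross-check by exhibiting the two-point representing measure explicitly. A secondary point is to compute the coefficients of $f_0$ directly from the equation $1+\xi f''/f'=p$, rather than relying on the $\beta$-dependent formulas \eqref{A2}--\eqref{A4}, so that the values $a_2=\tfrac12$, $a_3=0$, $a_4=-\tfrac14$ are verified independently.
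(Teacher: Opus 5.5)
Your proposal is correct and is essentially the paper's proof: the same data $(p_1,p_2,p_3)=(1,-1,-2)$ coming from $x=-1$, the same convex profile with $a_2=\tfrac12$, $a_3=0$, $a_4=-\tfrac14$, and the same lift $F_0(x)=\frac{f_0(T_{x_0}(x))}{T_{x_0}(x)}\,x$ through Lemma~\ref{L1}, giving $A_n=a_n$; the only cosmetic difference is that you read off the value $\tfrac18$ directly from the bound instead of re-maximizing the paper's auxiliary $G(c)=-4c^4+8c^2+32$. Your extra check that $(1,-1,-2)$ is a genuine Carath\'eodory coefficient triple, which the paper merely asserts, is settled explicitly by $p_0(z)=\frac12\bigl(\frac{1+e^{i\pi/3}z}{1-e^{i\pi/3}z}+\frac{1+e^{-i\pi/3}z}{1-e^{-i\pi/3}z}\bigr)=\frac{1-z^2}{1-z+z^2}$, for which $f_0'(\xi)=\frac{1}{1-\xi+\xi^2}$.
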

\begin{proof}[\bf Proof of Theorem \ref{Prop-4.1}]
Let $x_0 \in \partial\mathbb{B}$ be a fixed direction and choose a supporting linear functional $T_{x_0} \in T(x_0)$ satisfying the normalization $T_{x_0}(x_0) = 1$. In the classical case where the spiral parameter vanishes ($\beta = 0$), the objective function $G(c)$ restricted to the boundary parameter $\mu = 1$ reduces precisely to
\[
G(c) = -4c^4 + 8c^2 + 32.
\]
Differentiating $G(c)$ with respect to the real variable $c$ gives
\[
G'(c) = -16c^3 + 16c = -16c(c^2 - 1).
\]
Setting $G'(c) = 0$, the unique critical point inside the interval $c \in [0,2]$ is found at $c = 1$. The second derivative test gives $G''(1) = -48(1)^2 + 16 = -32 < 0$, which confirms that $c = 1$ is the absolute global maximum point of $G(c)$. The maximum value is given by 
\[
\max_{c\in[0,2]} G(c) = G(1) = -4(1)^4 + 8(1)^2 + 32 = 36.
\]
By multiplying this maximum value by the scaling factor $1/288$ derived from the structural inequality relations at $\beta = 0$, the upper bound of the second Hankel functional becomes
\[
|A_2A_4 - A_3^2| \le \frac{36}{288} = \frac{1}{8}.
\]
To show that this bound is strictly sharp, we choose the Carath\'{e}odory parameters corresponding to this maximum, namely $c_1 = 1$ and the boundary value $x = -1$. Under these parameter choices, the standard Carath\'{e}odory-Toeplitz structural relations yield $c_2 = -1$ and $c_3 = -2$. The positive real part function $p_0 \in \mathcal{P}$ has the Taylor series expansion 
\begin{align*}
	p_0(z) = 1 + z - z^2 - 2z^3 + \dots
\end{align*}
In the classical case where the spiral parameter vanishes ($\beta = 0$), the associated normalized single-variable convex function $f_0(\xi) = \xi + \sum_{n=2}^{\infty} a_n \xi^n$ is uniquely governed by the defining differential relation:
\[
1 + \frac{\xi f_0''(\xi)}{f_0'(\xi)} = p_0(\xi) = 1 + \xi - \xi^2 - 2\xi^3 + \dots
\]
Integrating this differential equation directly determines the Taylor coefficients up to the fourth order
\[
a_2 = \frac{c_1}{2} = \frac{1}{2},\; a_3 = \frac{c_2+c_1^2}{6} = 0,\; \mbox{and}\; a_4 = \frac{2c_3+3c_1c_2+c_1^3}{24} = -\frac{1}{4}.
\]
This gives the explicit series expansion for the extremal function $f_0(\xi)$ in the unit disk:
\[
f_0(\xi) = \xi + \frac{1}{2}\xi^2 - \frac{1}{4}\xi^4 + \dots
\]
Substituting these explicit coefficient values into the functional representing the single-variable second-order Hankel determinant, we obtain
\begin{align*}
	|a_2a_4 - a_3^2| = \left| \left(\frac{1}{2}\right)\left(-\frac{1}{4}\right) - (0)^2 \right| = \left| -\frac{1}{8} \right| = \frac{1}{8}.
\end{align*}
We now construct the explicit multi-dimensional extremal mapping $F_0: \mathbb{B} \to X$ on the complex Banach space $X$ by extending the single-variable profile along the direction of the supporting linear functional $T_{x_0}$. Up to the fourth order, the Fr\'echet--Taylor series expansion of $F_0(x)$ is given by
\begin{align*}
	F_0(x) = x + \frac{1}{2}\big[T_{x_0}(x)\big]x - \frac{1}{4}\big[T_{x_0}(x)\big]^3 x + \dots
\end{align*}
In view of Lemma \ref{L1}, the extremal mapping $F_0$ is an element of $\widehat{\mathcal{C}}_{0}(\mathbb{B})$ with respect to Theorem \ref{T1}, and directly satisfies the structural admissibility condition \eqref{cc1} for Theorem \ref{T2}. Computing the Fr\'echet differentials at the origin along the baseline vector $x_0$ yields the desired scalar coefficients via \eqref{eq:An} as
\begin{align*}
	A_2 = a_2 = \frac{1}{2},\; A_3 = a_3 = 0,\; \mbox{and}\; A_4 = a_4 = -\frac{1}{4}.
\end{align*}
Thus, it follows that
\[
|H_{2,2}(F_0)| = |A_2A_4 - A_3^2| = \left| \left(\frac{1}{2}\right)\left(-\frac{1}{4}\right) - (0)^2 \right| = \frac{1}{8}.
\]
This shows the sharpness for $\beta = 0$.
\end{proof}

The explicit calculations detailed above completely resolve the sharp bounds for the second-order Hankel determinant under the classical assumption that $\beta = 0$. However, when the spirallike parameter is non-vanishing, the underlying geometric variations introduce severe analytical complexities that prevent a direct application of standard Carath\'eodory--Toeplitz optimization. Whenever $\beta \neq 0$, finding this precise sharp bound and constructing its corresponding extremal mapping remains an open problem in both one-dimensional and multi-dimensional geometric function theory.\vspace{1.2mm}

\noindent{\bf Open question.} Can we establish a unified variational setting to determine the exact sharp upper bound for $|H_{2,2}(F)|$ when $\beta \in (-\pi/2, \pi/2) \setminus \{0\}$, and what are the explicit geometric profiles of the corresponding extremal mappings?

\section*{{\bf Declarations}}
\subsection*{Funding}
No Funding.
\subsection*{Data Availability Statement}
Data sharing is not applicable to this article as no datasets were generated or analyzed during the current study.
\subsection*{Conflict of Interest}
The authors declare that they have no conflict of interest. 
\subsection*{Author Contributions}
Both authors contributed equally to this work.

\end{document}